\documentclass[12pt,reqno]{amsart}
\usepackage{amsmath,amssymb,amsthm}
\usepackage{mathrsfs}
\usepackage{multirow}
\usepackage{tikz}
\usetikzlibrary{arrows.meta,bending}
\usepackage[margin=1in]{geometry}
\usepackage[colorlinks=true,linkcolor=blue,citecolor=blue,urlcolor=blue]{hyperref}

\theoremstyle{plain}
\newtheorem{theorem}{Theorem}[section]
\newtheorem{proposition}[theorem]{Proposition}
\newtheorem{lemma}[theorem]{Lemma}
\newtheorem{corollary}[theorem]{Corollary}
\theoremstyle{definition}
\newtheorem{definition}[theorem]{Definition}
\newtheorem{remark}[theorem]{Remark}
\newtheorem{example}[theorem]{Example}

\newcommand{\Z}{\mathbb{Z}}
\newcommand{\T}{\mathbb{T}}

\newcommand{\coker}{\operatorname{coker}}
\newcommand{\Ot}{\mathcal{O}}

\begin{document}

\title[$C^*$-algebras of locally finite undirected graphs: K-theory]{The $C^*$-algebras of
locally finite undirected graphs: A complete description of their K-theory}

\author{D. Pask}
\address{College of Science and Engineering, James Cook University, Townsville QLD 4811, Australia}
\email{david.a.pask@gmail.com}
\date{\today}

\subjclass[2020]{Primary 46L05, 46L80; Secondary 05C50, 22A22, 19K14}
\thanks{This research was partially funded by ARC DP\,150101598.}
\keywords{undirected graph, Serre graph, graph $C^*$-algebra, Cuntz--Krieger algebra,
Bass--Hashimoto operator, K-theory, first Betti number, genus}

\begin{abstract}
We study the $C^*$-algebra $C^*(\Gamma)$ of a locally finite undirected (Serre) graph $\Gamma$ and compute its
K-theory. The algebra is defined intrinsically, as the graph-of-groups algebra with all groups
trivial, and is shown to be independent of the choice of orientation. Its structure is accessed
through a passage to directed graphs: for every locally finite $\Gamma$ there is a row-finite
directed graph $E_\Gamma$ with $C^*(\Gamma)\cong C^*(E_\Gamma)$, and for essential $\Gamma$ this
identifies $C^*(\Gamma)$ with the Cuntz--Krieger algebra of the Bass--Hashimoto (non-backtracking)
matrix $T_\Gamma$. This makes the whole directed-graph K-theory machinery available and, unlike the
directed case, produces K-groups that read off the geometry of $\Gamma$: for a finite essential graph
of genus $g\ge 2$ one has $K_0(C^*(\Gamma))\cong\Z^{g}\oplus\Z/(g-1)\Z$ and
$K_1(C^*(\Gamma))\cong H_1(\Gamma)\cong\Z^{g}$, and in general $K_\ast$ is governed by the genus, the
number of ends and the number of dead-ends. We record the resulting classification by genus: the algebras are finite-dimensional or AF at genus
$0$, AT precisely for finite graphs at genus $1$, and unital Kirchberg algebras for finite essential
graphs of genus $\ge 2$.
\end{abstract}

\maketitle

\section{Introduction}\label{sec:intro}

The K-theory of the $C^*$-algebra of a directed graph is dynamical in character: for a finite
graph it is the Bowen--Franks data of the associated shift of finite type, and it bears little
relation to the geometry of the underlying graph. The starting point of this paper is the
observation that for \emph{undirected} graphs the situation is entirely different. To a locally
finite graph $\Gamma$ in the sense of Serre we associate an intrinsically defined $C^*$-algebra
$C^*(\Gamma)$---the graph-of-groups algebra of \cite{BMPST} with all vertex and edge groups
trivial---and its K-theory turns out to be a direct transcription of the geometry of $\Gamma$: the
genus $g$ \textup(first Betti number\textup), the graph valency $\gamma$ \textup(which, when
finite, counts the infinite chains\textup), and the number $d$ of dead-ends. The first computations of this kind are due to
Cornelissen, Lorscheid and Marcolli \cite{CLM} for finite graphs without sinks, and to Iyudu
\cite{Iyudu} for infinite graphs of finite Betti number; the purpose of this paper is to place
those computations in an intrinsic framework and to complete the classification across all genera,
including the non-essential graphs and the extreme genera $g=0,1$ lying outside the scope of
\cite{CLM,Iyudu}.

Our techniques are those of directed graph $C^*$-algebras, organised around a single construction:
to every locally finite $\Gamma$ we associate a row-finite directed graph $E_\Gamma$, built from
the reduced paths of length two with a tail adjoined at each dead-end, and prove that
$C^*(\Gamma)\cong C^*(E_\Gamma)$ \textup(Theorem~\ref{thm:general-realisation}\textup). For
essential $\Gamma$ the graph $E_\Gamma$ is the undirected analogue of the dual graph of Bates
\cite{Bates}, and the realisation identifies $C^*(\Gamma)$ with the Cuntz--Krieger algebra of the
Bass--Hashimoto \textup(non-backtracking\textup) operator $T_\Gamma$
\textup(Theorem~\ref{thm:essential-CK}\textup). The K-theoretic computations then combine several
ingredients: an embedding of the cycle space $H_1(\Gamma)$ into $\ker(1-T_\Gamma^{t})$ following
Robertson \cite{Robertson}; a contraction of a spanning tree onto a bouquet, implemented by an
explicit change of basis and justified at the $C^*$-level by the collapse move of S\o rensen
\cite{Sorensen}; a systematic bookkeeping of the sink and infinite-chain classes in the
presentation of $K_0$ coming from \cite{RS}, which produces the uniform formula
$K_0(C^*(\Gamma))\cong\Z^{g+\gamma+d}$ once $\Gamma$ fails to be essential and identifies the
torsion class $\Z/(g-1)\Z$ as the degeneration $\gamma=d=0$; and, for the type of the algebras, the
permanence properties of AT-algebras from \cite{Rordam}, whose failure under extensions---the
Toeplitz phenomenon---shows that the infinite genus-one algebras are not AT.

We work throughout with locally finite graphs, so that $E_\Gamma$ is row-finite. We do not treat
graphs with a vertex of infinite valence: Theorem~\ref{thm:general-realisation} and the results
built on it use the row-finite technology---the gauge-invariant uniqueness theorem and the
K-theory recipes of \cite{RS}---and in the non-row-finite setting they would require new proofs,
presumably through the Cuntz--Krieger algebras of infinite matrices and desingularisation. The
likelihood is that the geometric content of the K-theory would persist, with many of the K-groups
infinitely generated \textup(a vertex of infinite valence produces infinitely many ends\textup);
we therefore omit this case.

The paper is organised as follows. Section~\ref{sec:graphs} recalls Serre's notion of an
undirected graph and the combinatorial invariants---genus, graph valency, dead-ends---that
organise the results. Section~\ref{sec:algebra} defines $C^*(\Gamma)$ by generators and relations
and records the gauge action and the independence of orientation. Section~\ref{sec:realisation}
constructs the directed graph $E_\Gamma$ \textup(Definition~\ref{def:EGamma}\textup), proves the
realisation $C^*(\Gamma)\cong C^*(E_\Gamma)$ \textup(Theorem~\ref{thm:general-realisation}\textup)
and, for essential $\Gamma$, the identification $C^*(\Gamma)\cong\Ot_{T_\Gamma}$ with the
Cuntz--Krieger algebra of the Bass--Hashimoto operator
\textup(Theorem~\ref{thm:essential-CK}\textup). Section~\ref{sec:ktheory} contains the K-theory
and the classification by genus: $K_1(C^*(\Gamma))\cong H_1(\Gamma)$ and
$K_0(C^*(\Gamma))\cong\Z^{g}\oplus\Z/(g-1)\Z$ for finite essential $\Gamma$ of genus $g\ge 2$
\textup(Theorems~\ref{thm:K1-homology} and~\ref{thm:K0-torsion}\textup); the genus-zero and
genus-one cases, where the algebras are finite-dimensional, AF, AT, or---in the infinite genus-one
case---contain an infinite projection \textup(Lemma~\ref{lem:branching-ends},
Theorems~\ref{thm:genus0} and~\ref{thm:genus1} and Corollary~\ref{cor:genus-one-nonessential}\textup); and the
non-essential graphs of genus $g\ge 2$, with $K_1\cong\Z^{g}$ and $K_0\cong\Z^{g+\gamma+d}$
\textup(Theorem~\ref{thm:nonessential-K}\textup), where $\gamma=\gamma(\Gamma)$ is the graph
valency of $\Gamma$ \textup(Definition~\ref{def:valency}\textup). Section~\ref{sec:examples}
illustrates the results with worked examples across the genera, and closes with a summary table
recording, in each case, the K-groups, the type of the algebra, and where the entry is proved.

\section{Undirected graphs}\label{sec:graphs}

Following Serre \cite{Serre}, an \emph{(undirected) graph} is a quadruple
$\Gamma=(\Gamma^0,\Gamma^1,r,s)$ of countable vertex and edge sets with range and source maps
$r,s\colon\Gamma^1\to\Gamma^0$ and an edge-reversing involution $e\mapsto\bar e$ satisfying
\[
\bar e\neq e,\qquad \bar{\bar e}=e,\qquad s(\bar e)=r(e)\qquad(e\in\Gamma^1).
\]
Each geometric edge is thus a pair $\{e,\bar e\}$; drawing $\Gamma$ means choosing one of each pair.
The \emph{valence} of $v$ is $|r^{-1}(v)|=|s^{-1}(v)|$, and $\Gamma$ is \emph{locally finite} if every
valence is finite---the standing hypothesis throughout. We call $\Gamma$ \emph{essential} if
$|s^{-1}(v)|\ge 2$ for all $v$ (equivalently every valence is at least $2$). A vertex $v \in \Gamma^0$ is a sink if and only if it has valency one, sometimes we say that such a vertex is a \emph{dead-end}. This use of \emph{essential} is intrinsic to undirected graphs and should be distinguished from the established usage for directed graphs, in which a directed graph is called essential when it has no sources and no sinks; whenever the directed notion is meant below we say so explicitly (see Remark~\ref{rem:EGamma}).

A \emph{path} is a word
$e_n\cdots e_1$ with $s(e_{i+1})=r(e_i)$; it is \emph{reduced} if $e_{i+1}\neq\bar e_i$ for all $i$
(no immediate back-tracking), and its \emph{reversal} is $\bar e_1\cdots\bar e_n$. Note we use the ``Australian'' path convention here, in order to connect seamlessly with the established directed-graph literature, consult \cite{Raeburn}. We consider only connected graphs, that is for every $u,v \in \Gamma^0$ there is a path $e_n  \cdots e_1$ with $r(e_n)=v$ and $s(e_1)=u$.

Two combinatorial invariants organise everything below. The \emph{genus} (first Betti number)
$g=g(\Gamma)$ is the rank of $H_1(\Gamma)$, the number of geometric edges outside a spanning tree;
$g=0$ exactly when $\Gamma$ is a tree; a finite tree cannot be essential. For an infinite graph of
finite genus, which looks like a finite graph with finitely or infinitely many trees hanging off, the
\emph{graph valency} $\gamma=\gamma(\Gamma)$ is the sum of the valencies at the root and branching
vertices of those trees, in the following sense. Suppose we fix a finite subgraph of $\Gamma$ with
Betti number $g(\Gamma)$ \textup(the definition does not depend on this choice\textup). A
\emph{root vertex} is a root of one of the infinite trees; its valency is the number of outgoing
edges continuing to infinity. A \emph{branching vertex} is a vertex of a tree with one incoming
edge and more than one outgoing edge which leads to an infinite path; if $n$ is the number of such
outgoing edges, the valency of that vertex is $n-1$.

The definition below is taken from \cite[Definition~1.4]{Iyudu} and Example~\ref{ex:binary-tree}
below is \cite[Example~1]{Iyudu}.

\begin{definition}\label{def:valency}
The \emph{graph valency} $\gamma(\Gamma)$ of an infinite, locally finite graph $\Gamma$ of finite
genus $g$ is the sum of the valencies over all branching vertices and root vertices.
\end{definition}

If the graph valency is finite, then the graph valency coincides with the number of infinite chains outgoing from
the finite subgraph of $\Gamma$ \textup(that is, with the number of infinite ends\textup); in
general, however, it does not, as the following examples demonstrate.

\begin{example}\label{ex:tripod-gamma}
Consider the graph $\Gamma$ shown below, consisting of three infinite rays $(e_i)$, $(f_i)$ and
$(g_i)$ attached at a vertex $v$ \textup(reversed edges omitted\textup).
\begin{center}
\begin{tikzpicture}[>=Stealth]
\fill (0,0) circle (2pt) node[left] {$v$};
\draw[->] (0,0) -- node[above,sloped] {$e_1$} (1.4,0.7);
\draw[->] (1.4,0.7) -- node[above,sloped] {$e_2$} (2.8,1.4);
\fill (1.4,0.7) circle (2pt) node[below right=-2pt] {$x$}; \fill (2.8,1.4) circle (2pt);
\node at (3.4,1.45) {$\cdots$};
\draw[->] (0,0) -- node[below,yshift=2pt,xshift=3pt] {$f_1$} (1.4,0);
\draw[->] (1.4,0) -- node[above] {$f_2$} (2.8,0);
\fill (1.4,0) circle (2pt) node[below=2pt] {$y$}; \fill (2.8,0) circle (2pt);
\node at (3.5,0) {$\cdots$};
\draw[->] (0,0) -- node[below,sloped] {$g_1$} (1.4,-0.7);
\draw[->] (1.4,-0.7) -- node[below,sloped] {$g_2$} (2.8,-1.4);
\fill (1.4,-0.7) circle (2pt) node[below=3pt] {$z$}; \fill (2.8,-1.4) circle (2pt);
\node at (3.4,-1.45) {$\cdots$};
\end{tikzpicture}
\end{center}
We choose the finite subgraph $E$ to be the vertices $v,x,y$ and $z$, with edges $e_1$, $f_1$,
$g_1$. With root vertex $v$ we see that the root number is $3$, and since there are no branching
vertices we have $\gamma(\Gamma)=3$, with corresponding infinite paths $(e_i)_{i=1}^{\infty}$,
$(f_i)_{i=1}^{\infty}$ and $(g_i)_{i=1}^{\infty}$.
\end{example}

\begin{example}\label{ex:binary-tree}
Consider the full binary tree $BT$. The graph valency $\gamma(BT)$ is countable: all its vertices
are branching vertices of valency one, so the graph valency is equal to the number of vertices of
the tree. The infinite ends, however, can be enumerated by all sequences of $0$s and $1$s: if for
each vertex we label the edge going to the right by $0$ and the edge going to the left by $1$,
then the infinite paths are marked by all $0$--$1$ sequences, so there is a continuum of them.
\end{example}

\subsection*{Earlier results} The K-theory of $C^*(\Gamma)$ was first computed, for finite graphs, by
Cornelissen, Lorscheid and Marcolli \cite{CLM}, and extended to infinite locally finite graphs by
Iyudu \cite{Iyudu}; both proceed through the Bass--Hashimoto operator of \S\ref{sec:realisation} and the Cuntz--Krieger algebras of \cite{CK}. For
a finite connected graph $\Gamma$ of genus $g$ with no sinks, \cite{CLM} prove
\[
K_0(C^*(\Gamma))\cong\Z^{g}\oplus\Z/(g-1)\Z\ \ (g\ge 1),\qquad
K_1(C^*(\Gamma))\cong H_1(\Gamma)\cong\Z^{g}\ \ (g\ge 2),
\]
identifying $K_1$ with the cycle space $H_1(\Gamma)$, and locate the class of the unit in $K_0$ as an
element of order $(g-1)/\gcd(g-1,|\Gamma^0|)$. Iyudu \cite{Iyudu} treats the infinite locally finite
case of finite Betti number $2\le g<\infty$: the torsion of $K_0$ then vanishes and the graph valency $\gamma$ appears in its place,
\[
K_0(C^*(\Gamma))\cong\Z^{g+\gamma},\qquad K_1(C^*(\Gamma))\cong\Z^{g},
\]
so that, unlike in the finite case, $K_1$ need no longer be the torsion-free part of $K_0$. Both
computations are recovered below from the directed-graph realisation
(Theorems~\ref{thm:K0-torsion} and~\ref{thm:K1-homology}), which moreover reaches the genera
$g=0,1$ lying outside their scope.

\section{The $C^*$-algebra of an undirected graph}\label{sec:algebra}

In contrast to \cite{CLM,Iyudu}, the algebra $C^*(\Gamma)$ is defined directly, as the graph-of-groups $C^*$-algebra of
\cite{BMPST} in which every vertex and edge group is trivial.

\begin{definition}\label{def:CG}
For a locally finite graph $\Gamma$, $C^*(\Gamma)$ is the universal $C^*$-algebra generated by
mutually orthogonal projections $\{p_u:u\in\Gamma^0\}$ and partial isometries $\{s_f:f\in\Gamma^1\}$
satisfying
\begin{itemize}
\item[(R1)] $p_u=s_f^{*}s_f+s_{\bar f}s_{\bar f}^{*}$ for every $f\in s^{-1}(u)$, $u\in\Gamma^0$;
\item[(R2)] $\displaystyle s_e^{*}s_e=\!\!\sum_{r(f)=s(e),\,f\neq\bar e}\!\! s_fs_f^{*}$ for every
$e\in\Gamma^1$ with $|r^{-1}(s(e))|\ge 2$.
\end{itemize}
\end{definition}

We use the (``Australian") conventions of Raeburn's CBMS lecture notes \cite{Raeburn}, so that a directed
Cuntz--Krieger family has $s_e^{*}s_e=p_{s(e)}$ and $p_v=\sum_{r(e)=v}s_es_e^{*}$. From (R1) each
$s_f^{*}s_f+s_{\bar f}s_{\bar f}^{*}$ is a projection, whence $s_f^{*}s_f$ and
$s_{\bar f}s_{\bar f}^{*}$ are orthogonal and so $s_fs_{\bar f}=s_{\bar f}s_f=0$; combining (R1)
with (R2) at an edge $e$ with $|r^{-1}(s(e))|\ge 2$ gives
$p_{s(e)}=\sum_{r(f)=s(e)}s_fs_f^{*}$, the Cuntz--Krieger relation at $s(e)$. A short check
shows $C^*(\Gamma)$ is independent of the chosen edge
orientation. There is a gauge action $\gamma\colon\T\to\operatorname{Aut}C^*(\Gamma)$ with
$\gamma_z(s_f)=zs_f$ and $\gamma_z(p_u)=p_u$, deduced from the corresponding action for graphs of
groups \cite{BMPST}. That $C^*(\Gamma)$ is non-trivial follows from the realisation results of the
next section.

\section{Realisation as a directed graph algebra}\label{sec:realisation}

The engine of the theory is a passage from $\Gamma$ to a directed graph whose (directed) graph
algebra recovers $C^*(\Gamma)$; this imports the full apparatus of directed-graph $C^*$-algebras
\cite{KPR,RS}. The directed graph in question is built from the reduced paths of length two,
$\Gamma^2=\{fe: s(f)=r(e),\ f\neq\bar e\}$, with a tail adjoined at each dead-end.

\begin{definition}\label{def:EGamma}
Let $\Gamma$ be a locally finite graph. Define a directed graph $E_\Gamma$ by
\begin{align*}
E_\Gamma^0 &= \{f : f\in\Gamma^1\}\cup\{u_f : f\in\Gamma^1,\ |r^{-1}(s(f))|=1\},\\
E_\Gamma^1 &= \{fe : fe\in\Gamma^2\}\cup\{w_f : f\in\Gamma^1,\ |r^{-1}(s(f))|=1\},
\end{align*}
with range and source maps $r_P(fe)=f$, $s_P(fe)=e$ and $r_P(w_f)=f$, $s_P(w_f)=u_f$.
\end{definition}

\begin{remark}\label{rem:EGamma}
Local finiteness of $\Gamma$ ensures that $E_\Gamma$ is row-finite. There is at most one edge of
$E_\Gamma$ between any two of its vertices: the only possible edge from $e$ to $f$ is $fe$, and
$w_f$ is the only edge issuing from $u_f$. The vertices $u_f$, one for
each edge $f$ issuing from a dead-end of $\Gamma$, are sources of $E_\Gamma$ carrying the
projections $s_f^*s_f$ left unresolved by (R2); dually, each vertex $e$ with $r(e)$ a dead-end is a
sink of $E_\Gamma$. If $\Gamma$ is essential then no dead-ends occur, so $E_\Gamma^0=\Gamma^1$ and
$E_\Gamma^1=\Gamma^2$, and $E_\Gamma$ is the undirected analogue of the dual graph of Bates
\cite[Theorem~3.1]{Bates}. In this case $E_\Gamma$ is
essential as a directed graph: essentiality of $\Gamma$ supplies, for each $e\in\Gamma^1$, edges
$fe$ and $eg$ of $E_\Gamma$, so $E_\Gamma$ has no sources and no sinks. Since $E_\Gamma$ has at
most one edge between any pair of vertices, we may work, without loss of generality, with graphs
having at most one edge between any two vertices.
\end{remark}

Non-essential graphs are handled by the tails adjoined at dead-ends, generalising \cite{BMPST}; the
realisation theorem holds in full generality.

\begin{theorem}\label{thm:general-realisation}
For every locally finite graph $\Gamma$ we have $C^*(\Gamma)\cong C^*(E_\Gamma)$.
\end{theorem}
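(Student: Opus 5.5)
The plan is to construct mutually inverse $*$-homomorphisms $\pi\colon C^*(E_\Gamma)\to C^*(\Gamma)$ and $\rho\colon C^*(\Gamma)\to C^*(E_\Gamma)$, each obtained from a universal property. As a check on $\pi$, we will also verify injectivity via the gauge-invariant uniqueness theorem \cite{KPR,RS}, which applies since $E_\Gamma$ is row-finite by Remark~\ref{rem:EGamma}.

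\textbf{The family in $C^*(\Gamma)$.} In $C^*(\Gamma)$ we first build a Cuntz--Krieger $E_\Gamma$-family in Raeburn's convention. Each vertex $f\in\Gamma^1\subseteq E_\Gamma^0$ should carry the projection $P_f$ that (R2) decomposes into range projections of the admissible continuations of $f$. The natural choice is $P_f=s_f^*s_f$ or its reversed version $s_{\bar f}s_{\bar f}^*$; we fix whichever makes the summation index in (R2) match $\{fe\in\Gamma^2\}$ under $r_P(fe)=f$. The edge $fe$ is then sent to a partial isometry of gauge degree one, namely $S_{fe}=s_f\,P_e$ or the corresponding reversed or adjoint product, with the choice forced by the requirements
\[
S_{fe}^*S_{fe}=P_e,\qquad P_f=\sum_{fe\in\Gamma^2}S_{fe}S_{fe}^*.
\]
The second relation is precisely (R2) at $f$ whenever $|r^{-1}(s(f))|\ge 2$. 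When $s(f)$ is a dead-end, (R2) is vacuous, and this is where the tail enters: we set
\[
P_{u_f}=P_f,\qquad S_{w_f}=P_f,
\]
so that $w_f$ is a partial isometry from $u_f$ to $f$ and the Cuntz--Krieger relation at $f$ holds trivially. Vertices $e$ with $r(e)$ a dead-end are sinks, so no relation is required there. Mutual orthogonality of the $P_f$ will be deduced from (R1), together with the consequences of (R1) noted after Definition~\ref{def:CG}: the projections $s_f^*s_f$ and $s_{\bar f}s_{\bar f}^*$ are orthogonal, and $s_fs_{\bar f}=0$. The universal property of $C^*(E_\Gamma)$ then yields $\pi$.

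\textbf{Gauge-equivariance and surjectivity.} Since every $S_{fe}$ has degree one and every $P$ has degree zero, $\pi$ intertwines the gauge actions. For gauge-invariant uniqueness it remains to check $P_f\neq0$ for all $f$, which we will establish from a concrete representation on $\ell^2$ of reduced paths. Surjectivity of $\pi$ follows because the generators are recovered from the image. By (R1), each $p_u$ is a sum of $P$'s. Each $s_f$ is a sum of the $S_{fe}$ over the admissible $e$, or equals $S_{w_f}$-type terms at a dead-end, using the Cuntz--Krieger relation at $s(f)$ derived after Definition~\ref{def:CG}.

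\textbf{The inverse map.} Alternatively, and to make the argument symmetric, we define $\rho$ directly. We put $s_f=\sum_e S_{fe}$ (with $S_{w_f}$ replacing the sum when $s(f)$ is a dead-end) and $p_u=P_f+P_{\bar f}$-type sums for any $f\in s^{-1}(u)$. We then check (R1) and (R2) using the Cuntz--Krieger relations in $C^*(E_\Gamma)$. The key point for (R1) is that the decomposition of $p_u$ does not depend on the choice of $f\in s^{-1}(u)$; this uses the Cuntz--Krieger relation at the vertices $f$ and the partition of reduced length-two paths through $u$. Finally we verify $\rho\circ\pi=\mathrm{id}$ and $\pi\circ\rho=\mathrm{id}$ on generators.

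\textbf{Main obstacle.} The main difficulty will be the index bookkeeping: orienting $S_{fe}$ so that source and range match both the Australian convention and the asymmetric form of (R2), and, most delicately, verifying (R1) at vertices adjacent to dead-ends. At such vertices one of the two summands in (R1) is not resolved by (R2), and it must be accounted for exactly by the tail vertex $u_f$. We will handle this first by a careful case split on whether $s(f)$, $r(f)$, or both are dead-ends; the degenerate case of a single edge, in which both endpoints are dead-ends, is checked directly.
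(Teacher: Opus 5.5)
Your overall plan matches the paper's: build a Cuntz--Krieger $E_\Gamma$-family inside $C^*(\Gamma)$, get $\pi$ by universality, then apply gauge-invariant uniqueness and check surjectivity. There is a genuine gap, however: the family you describe does not exist, and the problem is not index bookkeeping.

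\textbf{The vertex projections.} Your first candidate $P_f=s_f^*s_f$ is not an orthogonal family. If $f\neq f'$ share the source $u$ and $u$ has valence at least $3$, then (R2) gives $s_f^*s_f\,s_{f'}^*s_{f'}=\sum_{r(g)=u,\ g\neq\bar f,\bar f'}s_gs_g^*\neq0$. Your second candidate $P_f=s_{\bar f}s_{\bar f}^*$ is orthogonal, but it is split at the wrong end of $f$. Via $s_{\bar f}$, (R2) at $\bar f$ splits it over the edges $g\neq f$ with $r(g)=r(f)$, which gives the Cuntz--Krieger relations of the opposite graph $E_\Gamma^{\mathrm{op}}$. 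The relation at $f$ in $E_\Gamma$ instead needs pieces equivalent to $s_{\bar e}s_{\bar e}^*$ for $r(e)=s(f)$, $e\neq\bar f$.

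No choice of $S_{fe}$ can repair this. In $K_0(C^*(\Gamma))$, the required relation $[s_{\bar f}s_{\bar f}^*]=\sum_{fe\in\Gamma^2}[s_{\bar e}s_{\bar e}^*]$ reduces, using the relations that do hold, to $(n-2)[p_u]=0$, where $n$ is the valence of $u=s(f)$. For finite essential $\Gamma$ all $[p_u]$ equal a common class of order exactly $g-1$. Take the complete graph $K_4$: it has genus $3$ and every valence is $3$, so the relation would force a class of order $2$ to vanish.

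\textbf{The tails.} This assignment fails too. Setting $P_{u_f}=P_f$ contradicts orthogonality at the distinct vertices $u_f\neq f$. Moreover, $S_{w_f}=P_f$ has gauge degree zero, so it breaks gauge-equivariance and cannot recover $s_f$.

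\textbf{What is missing.} Put the \emph{range} projection at $f$: $Q_f=s_fs_f^*$.
\begin{itemize}
\item Since $fe\in\Gamma^2$ forces $|r^{-1}(s(f))|\ge2$, (R2) gives $s_es_e^*\le s_f^*s_f$.
\item Hence $S_{fe}=s_fs_es_e^*$ is a degree-one partial isometry with $S_{fe}^*S_{fe}=Q_e$.
\item Conjugating (R2) by $s_f$ gives $\sum_eS_{fe}S_{fe}^*=s_f(s_f^*s_f)s_f^*=Q_f$.
\item The source projection $s_f^*s_f$ appears only when $s(f)$ is a dead-end, as the tail projection $Q_{u_f}$. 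It is exactly the summand of (R1) at the dead-end that (R2) leaves unresolved, and $T_{w_f}=s_f$ carries it onto $Q_f$.
\item Surjectivity is then $s_f=\sum_eS_{fe}$ (or $s_f=T_{w_f}$), together with (R1).
\end{itemize}

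\textbf{The inverse map.} If you also want $\rho$, it must send $p_u$ to $\sum_{r(e)=u}P_e$, plus $P_{u_{\bar g}}$ when $u$ is a dead-end with $r^{-1}(u)=\{g\}$. A two-term sum of vertex projections will not do, because $\rho(s_f^*s_f)=\sum_{fe\in\Gamma^2}P_e$ is in general a sum of several vertex projections when $s(f)$ is not a dead-end.

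Your point that $Q_v\neq0$ needs justification is fair. The paper's sketch simply asserts it, and a correct inverse map would supply it.
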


\begin{proof}[Sketch of proof]
Let $\{p_u,s_f\}$ be a universal generating family for $C^*(\Gamma)$. For $f\in\Gamma^1\subset
E_\Gamma^0$ put $Q_f=s_fs_f^*$; for each tail vertex put $Q_{u_f}=s_f^*s_f$, the projection left
unresolved by (R2); for $fe\in\Gamma^2$ put $S_{fe}=s_fs_es_e^*$; and for each tail edge put
$T_{w_f}=s_f$. Relations (R1) and (R2) show that $\{Q_v : v\in E_\Gamma^0\}$ is a family of
mutually orthogonal projections. Since $fe\in\Gamma^2$ guarantees that (R2) applies at $f$, we have
$s_es_e^*\le s_f^*s_f$ whenever $fe\in\Gamma^2$, whence
$S_{fe}^*S_{fe}=s_es_e^*=Q_{s_P(fe)}$; and for $f$ with $|r^{-1}(s(f))|\ge 2$, summing over the
edges of $E_\Gamma$ with range $f$,
\[
\sum_{r(e)=s(f),\,e\neq\bar f}S_{fe}S_{fe}^{*}
= s_f\Big(\sum_{r(e)=s(f),\,e\neq\bar f}s_es_e^{*}\Big)s_f^{*}
= s_f\,s_f^{*}s_f\,s_f^{*}=Q_f,
\]
again by (R2). For the tail edges,
$T_{w_f}^*T_{w_f}=s_f^*s_f=Q_{u_f}=Q_{s_P(w_f)}$, and since $w_f$ is the unique edge of $E_\Gamma$
with range $f$, the relation at $f$ is $T_{w_f}T_{w_f}^*=s_fs_f^*=Q_f$; the vertices $u_f$ are
sources and carry no relation. Hence $\{Q_v,S_{fe},T_{w_f}\}$ is a Cuntz--Krieger
$E_\Gamma$-family in $C^*(\Gamma)$, and the universal property of $C^*(E_\Gamma)$ yields a
homomorphism $\pi\colon C^*(E_\Gamma)\to C^*(\Gamma)$ intertwining the gauge actions. Since each
$Q_v\neq 0$, the gauge-invariant uniqueness theorem shows $\pi$ is injective. For surjectivity,
$s_f=T_{w_f}$ when $|r^{-1}(s(f))|=1$, while otherwise (R2) gives
$s_f=s_fs_f^*s_f=\sum_{r(e)=s(f),\,e\neq\bar f}S_{fe}$; the projections $p_u$ then lie in the image
by (R1).
\end{proof}

\subsection*{The essential case and the Bass--Hashimoto operator} For an essential locally finite
$\Gamma$ we may realise $C^*(\Gamma)$ as a Cuntz--Krieger algebra through the
non-backtracking operator $T_\Gamma$.

\begin{definition}\label{def:BH}
For an essential locally finite $\Gamma$, the \emph{Bass--Hashimoto operator}
$T_\Gamma\colon\Z\Gamma^1\to\Z\Gamma^1$ is
\[
T_\Gamma(h)(e)=\!\!\sum_{r(f)=s(e),\,f\neq\bar e}\!\! h(f),\qquad\text{equivalently}\qquad
T_\Gamma\,\delta_e=\!\!\sum_{r(f)=s(e),\,f\neq\bar e}\!\!\delta_f,
\]
\noindent
where $\{ \delta_e : e \in \Gamma^1 \}$ is a basis for $\mathbb{Z} \Gamma^1$. Then $T_\Gamma$ is
a $\Gamma^1\times\Gamma^1$ $\{0,1\}$-matrix whose $e$-column has a $1$ in row $f$ if and only if
$ef$ is a reduced path (that is, $r(f)=s(e)$ and $f\neq\bar e$). The sums are taken over
$r(f)=s(e)$, rather than over $s(f)=r(e)$, to fit with the Australian path conventions in use
throughout the paper; in particular relation (R2) is then literally the Cuntz--Krieger relation for
$T_\Gamma$ (see Theorem~\ref{thm:essential-CK}). The transpose of $T_\Gamma$ is
$T_\Gamma^{t}(h)(e)=\sum_{s(f)=r(e),\,f\neq\bar e}h(f)$, whose fixed-point space, for connected
essential $\Gamma$ of genus $g\ge 2$, is the cycle space $H_1(\Gamma)$
\textup(Theorem~\ref{thm:K1-homology}\textup).
\end{definition}

\begin{theorem}\label{thm:essential-CK}
Let $\Gamma$ be an essential locally finite graph. Then $C^*(\Gamma)\cong\Ot_{T_\Gamma}$, the
Cuntz--Krieger algebra of the Bass--Hashimoto matrix.
\end{theorem}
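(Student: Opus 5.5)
The plan is to combine Theorem~\ref{thm:general-realisation} with the standard identification of the graph algebra of a row-finite graph without sinks and sources with the Cuntz--Krieger algebra of its vertex matrix. Since $\Gamma$ is essential, Remark~\ref{rem:EGamma} gives $E_\Gamma^0=\Gamma^1$ and $E_\Gamma^1=\Gamma^2$. It also shows that $E_\Gamma$ has at most one edge between any two vertices, and that it has no sources and no sinks. Its vertex matrix $A_{E_\Gamma}(f,e)$ is the number of edges from $e$ to $f$, so it is $1$ exactly when $fe\in\Gamma^2$, that is when $s(f)=r(e)$ and $f\neq\bar e$. I would first check carefully that this $\{0,1\}$-matrix is $T_\Gamma$ (or its transpose, depending on which convention for $\Ot_A$ one adopts). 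By Definition~\ref{def:BH}, the $e$-column of $T_\Gamma$ has a $1$ in row $f$ exactly when $ef$ is reduced. So $T_\Gamma$ is the incidence matrix ``$e$ is followed by $f$'', and under the Australian convention this is precisely the edge matrix of $E_\Gamma$.

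Next I would invoke the known isomorphism $C^*(E)\cong\Ot_{A_E^{t}}$ (or $\Ot_{A_E}$, depending on the convention) for a row-finite graph $E$ with no sinks or sources and at most one edge between vertices. For finite $\Gamma$ this is \cite{KPR}; for infinite locally finite $\Gamma$ it is the Cuntz--Krieger algebra of an infinite row-finite $\{0,1\}$-matrix in the sense of Exel--Laca, where the row-finite relations coincide with the graph relations. Concretely, I would define $S_e:=s_e$ in $C^*(\Gamma)$ and verify directly the Cuntz--Krieger relations $S_e^*S_e=\sum_f T(e,f)S_fS_f^*$, with mutually orthogonal ranges $S_eS_e^*$. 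The first relation is literally (R2), as noted after Definition~\ref{def:BH}, and (R2) applies at every edge because essentiality gives $|r^{-1}(s(e))|\ge 2$. Orthogonality of the ranges follows from the computation $p_{s(e)}=\sum_{r(f)=s(e)}s_fs_f^*$ recorded after Definition~\ref{def:CG}, since the summands of a projection that sum to a projection are orthogonal, and ranges at distinct vertices are orthogonal because the $p_u$ are. This gives a homomorphism $\Ot_{T_\Gamma}\to C^*(\Gamma)$. It is surjective since, in the essential case, $p_u=s_f^*s_f+s_{\bar f}s_{\bar f}^*$ lies in the image by (R1). It is injective by the gauge-invariant uniqueness theorem, applied after passing to $C^*(E_\Gamma)$ through Theorem~\ref{thm:general-realisation}, because every $s_es_e^*=Q_e$ is nonzero.

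The main obstacle is bookkeeping rather than analysis. I need to confirm that the transposition and orientation conventions match, so that one obtains $\Ot_{T_\Gamma}$ and not $\Ot_{T_\Gamma^t}$. I would settle this by noting that edge reversal $e\mapsto\bar e$ conjugates $T_\Gamma$ to $T_\Gamma^{t}$. Indeed, $ef$ is reduced if and only if $\bar f\bar e$ is reduced, so the permutation matrix of the involution intertwines $T_\Gamma$ and $T_\Gamma^t$, and $\Ot_{T_\Gamma}\cong\Ot_{T_\Gamma^t}$ regardless of convention. The second point to confirm is that the infinite case is covered: this requires $T_\Gamma$ to be row- and column-finite with no zero rows or columns, which local finiteness and essentiality supply.
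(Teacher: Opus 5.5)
Your proposal is correct and follows the paper's argument: essentiality makes (R2) available at every edge, where it is literally the Cuntz--Krieger relation for $T_\Gamma$, so the $s_e$ form a Cuntz--Krieger $T_\Gamma$-family, and the gauge-invariant uniqueness theorem, reached through $E_\Gamma$, gives the isomorphism. Your extra checks fill in details the paper leaves implicit: orthogonality of the ranges, surjectivity via (R1), and the observation that edge reversal $e\mapsto\bar e$ conjugates $T_\Gamma$ to $T_\Gamma^{t}$, so the transpose convention for $\mathcal{O}_A$ does not matter.
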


The proof uses that essentiality makes (R2) available at every vertex; then (R2) is precisely the
Cuntz--Krieger relation $s_e^{*}s_e=\sum_f T_\Gamma(f,e)\,s_fs_f^{*}$ for the matrix $T_\Gamma$, so
$\{s_e,p_u\}$ is a Cuntz--Krieger $T_\Gamma$-family, and the gauge-invariant uniqueness theorem gives
the isomorphism.

\section{K-theory and the classification by genus}\label{sec:ktheory}

Because $C^*(\Gamma)$ is a directed-graph algebra, its K-theory is computed by the standard
recipe. The point is that the answer is \emph{geometric}: it reads off the genus and the ends of
$\Gamma$, in contrast to directed graph algebras, whose K-theory is the (dynamical) Bowen--Franks
data of a shift of finite type and bears little relation to the underlying geometry.

\begin{corollary}[to Theorem~\ref{thm:essential-CK}]\label{cor:ker-coker}
For an essential locally finite graph $\Gamma$,
\[
K_0(C^*(\Gamma))\cong\coker\!\big(1-T_\Gamma^{t}\big),\qquad
K_1(C^*(\Gamma))\cong\ker\!\big(1-T_\Gamma^{t}\big).
\]
If $\Gamma$ is finite, connected and of genus $g\ge 2$, then $C^*(\Gamma)$ is purely infinite and
simple.
\end{corollary}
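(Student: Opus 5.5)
We deduce both halves of the corollary from Theorem~\ref{thm:essential-CK}, or equivalently from Theorem~\ref{thm:general-realisation} with $E_\Gamma$ having vertex set $\Gamma^1$ and edge set $\Gamma^2$ in the essential case (Remark~\ref{rem:EGamma}). Since $E_\Gamma$ is row-finite with no sinks and no sources, the K-theory recipe of \cite{RS} applies with no singular vertices. Let $A_E$ be the vertex matrix of $E_\Gamma$, with $A_E(f,e)=1$ exactly when $fe\in\Gamma^2$. Then
\[
K_0(C^*(E_\Gamma))\cong\coker\big(1-A_E^{t}\colon\Z E_\Gamma^0\to\Z E_\Gamma^0\big),\qquad K_1(C^*(E_\Gamma))\cong\ker\big(1-A_E^{t}\big),
\]
where $\Z E_\Gamma^0$ denotes finitely supported functions. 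The orientation of the transpose depends on the convention; with the Australian convention the map is $\delta_v\mapsto\delta_v-\sum_{s(e)=v}\delta_{r(e)}$ or its dual, and I will fix this against \cite{RS}.

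The main step is to match $A_E$ with $T_\Gamma$. By Definition~\ref{def:BH}, the $e$-column of $T_\Gamma$ has a $1$ in row $f$ exactly when $ef$ is reduced. The adjacency matrix $A_E$ has a $1$ at $(f,e)$ exactly when $fe$ is reduced, so $T_\Gamma=A_E^{t}$ up to the transposition conventions. Here I expect the only real obstacle: bookkeeping the transposes so that the formula comes out as $1-T_\Gamma^{t}$ rather than $1-T_\Gamma$. I would check it directly on the generators. The relation $[s_e^*s_e]=\sum_f T_\Gamma(f,e)[s_fs_f^*]$ in $K_0$, together with $[s_e^*s_e]=[s_es_e^*]$, shows that $K_0$ is generated by the classes $[s_fs_f^*]$ subject to $\delta_e=\sum_f T_\Gamma(f,e)\delta_f$, that is, $(1-T_\Gamma)\delta_e=0$. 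The quotient is then $\coker(1-T_\Gamma)$ in the column convention, which is what the paper writes as $\coker(1-T_\Gamma^{t})$ when matrices act on row vectors. So I would state the identification with the paper's convention that $T_\Gamma^t(h)(e)=\sum_{s(f)=r(e),f\neq\bar e}h(f)$. Alternatively one can note that $E_\Gamma$ is isomorphic to its reverse via $e\mapsto\bar e$, because $fe$ is reduced iff $\bar e\bar f$ is reduced. This gives $T_\Gamma^t=JT_\Gamma J$ with $J$ the involution permutation matrix, so kernels and cokernels of $1-T_\Gamma$ and $1-T_\Gamma^t$ are isomorphic. That makes the transpose ambiguity harmless, and it is the step I would rely on. $K_1$ is handled the same way as the kernel.

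For the second claim, take $\Gamma$ finite, connected and essential of genus $g\ge2$. Then $E_\Gamma$ is a finite graph with no sinks or sources, so I need to show two things. First, $E_\Gamma$ is cofinal, indeed strongly connected: $T_\Gamma$ is irreducible. This is the standard fact that for connected graphs with minimum degree $\ge2$ and not a cycle (which is guaranteed by $g\ge2$), any directed edge can be extended by non-backtracking walks to any other. The argument goes by walking to a cycle and using a vertex of degree $\ge3$ to turn around; I would cite or sketch it. Second, every cycle has an exit (Condition (L)): a cycle in $E_\Gamma$ is a closed non-backtracking walk in $\Gamma$, and since $\Gamma$ is not a single cycle, the walk passes a vertex of valence $\ge3$, giving an exit. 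Simplicity then follows from cofinality plus Condition (L), and pure infiniteness from every vertex connecting to a cycle, by \cite{KPR}. This transfers to $C^*(\Gamma)$ via the isomorphism.
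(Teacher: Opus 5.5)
Your proposal is correct and follows essentially the paper's route. The K-groups come from the standard Cuntz--Krieger/graph-algebra recipe of \cite{PR,RS} applied to $T_\Gamma$ (equivalently to $E_\Gamma$). Simplicity and pure infiniteness come from irreducibility of $T_\Gamma$ together with exits on every cycle; your formulation of this via cofinality and condition~(L) in \cite{KPR} is the graph-algebra counterpart of the paper's appeal to condition~(I) in \cite{CK}.

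Your observation that $T_\Gamma^{t}=JT_\Gamma J$ for the involution $J\colon\delta_e\mapsto\delta_{\bar e}$ is a worthwhile addition that the paper leaves implicit. It makes the transpose convention irrelevant to the kernel and cokernel. This matters, because the two descriptions of $T_\Gamma$ given in Definition~\ref{def:BH} are in fact transposes of each other.
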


The K-theory formula is the standard one for Cuntz--Krieger and graph $C^*$-algebras \cite{PR,RS},
and the pure infiniteness and simplicity follow because $T_\Gamma$ is then irreducible and satisfies
condition~(I) \cite{CK}. The kernel is exactly the cycle space $H_1(\Gamma)$, which yields the free part
uniformly.

\subsection*{Genus zero} At genus $0$ and $1$ the algebra is not purely infinite, and, unlike the Cuntz--Krieger approach of \cite{CLM,Iyudu}, our
directed model for $C^*(\Gamma)$ identifies it precisely; we treat the genera in turn. The genus-zero analysis rests on the
notion of an end of a directed graph, taken from \cite[Definition~3.6]{PRennie}. Note that a finite
graph of genus zero is a tree and so cannot be essential, while an infinite essential graph of genus
zero is a tree in which all paths are infinite; the infinite binary tree is an example.

\begin{definition}[{\cite[Definition~3.6]{PRennie}}]\label{def:end}
Let $E$ be a row-finite directed graph. An \emph{end} of $E$ is a sink, a loop without exit, or an
infinite path with no exits.
\end{definition}

\begin{lemma}[{cf.\ \cite[Lemma~6.1]{PRennie}}]\label{lem:ends-ktheory}
Let $E$ be a row-finite directed graph in which no loop has an exit. Then
\[
K_0(C^*(E))\cong\Z^{\#\mathrm{ends}},\qquad K_1(C^*(E))\cong\Z^{\#\mathrm{loops}}.
\]
\end{lemma}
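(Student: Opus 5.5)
The plan is to use the Raeburn--Szyma\'nski description of the K-theory of a row-finite graph algebra. Write $E^0=S\sqcup R$, where $S$ is the set of sinks and $R$ the set of non-sinks. Let $A_E$ be the vertex matrix. Then $K_\ast(C^*(E))$ is the cokernel and the kernel of
\[
\begin{pmatrix} 1-A_R^{t}\\ -B^{t}\end{pmatrix}\colon \Z^{R}\to\Z^{R}\oplus\Z^{S},
\]
where $A_R$ is the $R\times R$ block of $A_E$ and $B$ records edges from $R$ into $S$. Equivalently, $K_0$ is generated by the classes $[p_v]$, $v\in E^0$, subject to $[p_v]=\sum_{s(e)=v}[p_{r(e)}]$ for $v\in R$ (with the direction of the sum fixed by the Australian convention). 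The map $\delta_v\mapsto[p_v]$ identifies $K_0$ with the free abelian group on $E^0$ modulo these relations, and $K_1$ with the corresponding kernel.

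Under the hypothesis that no loop has an exit, every vertex lies in one of three kinds of position. It may lie on a loop, which is then a cycle without exit and so an end. It may have a unique path of exit-free "terminal behaviour" from some point on. Or it may flow into several ends. The key structural step is a Cantor--Bernstein style reduction. First, the relation at a vertex $v$ on a cycle without exit reads $[p_v]=[p_{v'}]$, where $v'$ is its successor. Thus each cycle contributes a single generator, and a single kernel element, namely the sum of the cycle's vertex indicators. Second, for a vertex $v$ not on a cycle, I will express $[p_v]$ as a sum of classes attached to ends by iterating the relation. Each exit-free infinite path $x$ contributes one generator $[p_{s(x_n)}]$, independent of $n$. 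Sinks contribute $[p_s]$, and cycles contribute $[p_v]$ for any $v$ on the cycle.

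I would formalise this as follows. Choose a set $X$ of representatives, one vertex per end: the sink itself, a vertex on the cycle, or a tail vertex of the infinite path. Then show that the induced map $\Z^{X}\to\operatorname{coker}$ is an isomorphism. For surjectivity, row-finiteness plus the absence of exits from loops lets one push each $[p_v]$ forward, via the relations, into a finite combination of end classes. For injectivity, define a "terminal measure" homomorphism $\operatorname{coker}\to\Z^{X}$. It sends $[p_v]$ to the vector counting, for each end, the paths from $v$ into that end. This is finite when the ends are well-defined and is compatible with the relations. The $K_1$ computation is analogous: a kernel element is a finitely supported $x\in\Z^R$ with $(1-A_R^t)x=0$ and $B^tx=0$. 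A finite-support argument shows its support must be a union of exit-free cycles, on each of which $x$ is constant. Hence the kernel is $\Z^{\#\mathrm{loops}}$.

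The main obstacle is the surjectivity/pushing step and the finiteness of the counting map. The hypothesis as stated does not exclude vertices that branch infinitely often (compare Example~\ref{ex:binary-tree}). In that case iterating the relation need not terminate, and a vertex may carry infinitely many paths to ends, or paths to no end at all. I would first try to handle this as \cite[Lemma~6.1]{PRennie} does. That means interpreting "ends" generously, with one generator per exit-free tail equivalence class, and reading the count $\#\mathrm{ends}$ as the rank of the free group so obtained. If this fails, I would restrict, as \cite{PRennie} implicitly does, to graphs in which every vertex reaches only finitely many ends along finitely many paths. I would then verify that this covers the applications to $E_\Gamma$ in genus zero.
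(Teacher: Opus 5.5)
Your $K_1$ argument is sound and needs no extra hypothesis. If $x$ is a finitely supported kernel vector, every vertex of its support has a predecessor in the support. A backward walk inside the support therefore closes up into a cycle, and a path from that cycle to a support vertex off it would be an exit. Entrances to a cycle are then off the support, which forces $x$ to be constant on each cycle.

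The gap is in the $K_0$ half, exactly where you flag it, and neither of your repairs closes it. Pushing $[p_v]$ forward through the relations terminates only if every path from $v$ eventually runs into a sink, an exit-free cycle or an exit-free ray. Your conditions do not ensure this. Take the rooted binary tree, with edges $w\to w0$ and $w\to w1$ in the convention of \cite{RS}. It has no sinks, no loops and no exit-free infinite paths, so it has no ends and no exit-free tail classes. It also satisfies your restriction (finitely many ends along finitely many paths) vacuously. Yet its cokernel is presented by $[p_w]=[p_{w0}]+[p_{w1}]$, so $K_0\cong\varinjlim\Z^{2^n}\cong C(\{0,1\}^{\mathbb N},\Z)$, which is free of infinite rank. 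In particular $[p_{\mathrm{root}}]\neq0$, as the graph trace $w\mapsto2^{-|w|}$ shows, and it is not a combination of end classes. So the surjectivity step fails, and the statement as literally written is false. No argument can prove it without strengthening the hypothesis.

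The hypothesis that makes your strategy work is that every infinite path in $E$ is eventually exit-free. Ends should then be counted as sinks, cycles, and tail-equivalence classes of exit-free infinite paths other than those winding round a cycle.

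\begin{itemize}
\item \textbf{Surjectivity.} Row-finiteness and K\"onig's lemma show that the paths from $v$, stopped at their first vertex lying on an end, form a finite tree. Iterating the relations over this tree writes $[p_v]$ as a sum of end classes.
\item \textbf{Injectivity.} The same finiteness makes your counting map well defined, provided you count only these first-entry paths. Counting all paths into an end is infinite for a cycle, and ambiguous for a ray, which a path may join at many different vertices.
\item \textbf{Compatibility with the relations.} At vertices off the ends, split first-entry paths by their first edge. A vertex on an end emits a single edge, which stays in that end.
\end{itemize}

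This restricted version is all the paper needs. In Lemma~\ref{lem:branching-ends} the tree $\Gamma$ has finitely many dead-ends and finite graph valency, hence only finitely many vertices of valence at least $3$. Reduced walks in a tree never revisit a vertex, so every infinite path in $E_\Gamma$ is eventually exit-free.

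The paper's own proof is a bare appeal to continuity of $K_*$ and \cite[Corollary~5.3]{RS}, and it does not address this point either. Once the hypothesis is corrected, your direct computation from the presentation in \cite[Theorem~3.2]{RS} gives a more transparent proof than that citation.
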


\begin{proof}
This follows from the continuity of $K_*$ and \cite[Corollary~5.3]{RS}.
\end{proof}

\begin{lemma}\label{lem:branching-ends}
Let $\Gamma$ be a \textup(not necessarily essential\textup) graph of genus zero with finitely many
dead-ends, $d$ in number.
\begin{enumerate}
\item If $\Gamma$ is finite, then it cannot be essential.
\item If $\Gamma$ is finite, then $E_\Gamma$ has $d$ ends, and hence
$K_0(C^*(\Gamma))\cong\Z^{d}$ and $K_1(C^*(\Gamma))=0$; moreover $C^*(\Gamma)$ is a direct sum of
$d$ matrix algebras, and so is finite-dimensional.
\item If $\Gamma$ is infinite with graph valency $\gamma(\Gamma)<\infty$, then $E_\Gamma$ has
$\gamma(\Gamma)+d$ ends, and hence $K_0(C^*(\Gamma))\cong\Z^{\gamma(\Gamma)+d}$ and
$K_1(C^*(\Gamma))=0$; in particular, if $\Gamma$ is essential then $d=0$ and
$K_0(C^*(\Gamma))\cong\Z^{\gamma(\Gamma)}$.
\end{enumerate}
\end{lemma}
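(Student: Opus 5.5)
Throughout, the plan is to pass to $E_\Gamma$ via Theorem~\ref{thm:general-realisation} and apply Lemma~\ref{lem:ends-ktheory}. The key combinatorial input is that when $\Gamma$ is a tree, $E_\Gamma$ has no loops at all. Indeed, a cycle $f_n f_{n-1}\cdots f_1 f_n$ of edges of $E_\Gamma$ would give a closed reduced path $f_n\cdots f_1$ in $\Gamma$; the tail edges $w_f$ cannot lie on cycles, since $u_f$ is a source. A closed reduced path is impossible in a tree, because a reduced path of positive length in a tree never returns to its starting vertex. Hence no loop of $E_\Gamma$ has an exit (vacuously), Lemma~\ref{lem:ends-ktheory} applies, $K_1=0$, and $K_0\cong\Z^{\#\mathrm{ends}}$. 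The remaining work is counting ends, i.e.\ sinks and exit-free infinite paths of $E_\Gamma$.

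For (1): a finite tree with at least one edge has a leaf, by the usual longest-path argument, and a leaf has valence $1$, so the tree is not essential. The one-vertex tree has valence $0$, so it is not essential either. For (2), the sinks of $E_\Gamma$ are exactly the vertices $e\in\Gamma^1$ with $r(e)$ a dead-end, as noted in Remark~\ref{rem:EGamma}. Each dead-end $v$ has a unique incoming edge, so there are exactly $d$ sinks. Since $E_\Gamma$ is finite and has no cycles, it has no infinite paths, so the ends are these $d$ sinks. A finite acyclic row-finite directed graph has $C^*(E)\cong\bigoplus_{\text{sinks }w} M_{n_w}(\mathbb{C})$, where $n_w$ is the number of paths ending at $w$ (the standard result of Kumjian--Pask--Raeburn). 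This gives finite-dimensionality and recovers $K_0\cong\Z^d$. The trivial cases (a single vertex, or a single geometric edge) need a separate check against the conventions.

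For (3), infinite paths in $E_\Gamma$ correspond to infinite reduced paths in $\Gamma$, read in the appropriate direction: a path $\cdots (f_3f_2)(f_2f_1)$ in $E_\Gamma$ with range $f_1$ is a reduced ray $\cdots f_3f_2f_1$, traversed backwards. An infinite path of $E_\Gamma$ has no exit exactly when each vertex along it receives only the edge used, that is, when each intermediate vertex of the corresponding ray in $\Gamma$ has valence $2$ from that point on. So the exit-free infinite paths are the tails of rays that are eventually unbranched, counted up to shift (or up to the tail equivalence used in \cite{PRennie}). When $\gamma(\Gamma)<\infty$, only finitely many vertices branch, so every end of the tree is eventually an unbranched chain. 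The number of such chains equals the number of infinite chains leaving a finite core, which is $\gamma(\Gamma)$ by the remark following Definition~\ref{def:valency}. One should check the orientation convention here: each ray gives one exit-free end in $E_\Gamma$, namely the direction travelling out to infinity, while the reversed direction has exits at the branch vertices or terminates at a sink or source. Adding the $d$ sinks gives $\gamma(\Gamma)+d$ ends. If $\Gamma$ is essential then $d=0$.

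The main obstacle is this end count in (3). We must match the precise notion of ``infinite path with no exits'' in Definition~\ref{def:end} with unbranched rays, and avoid double counting. Two issues arise: tail-equivalent paths must count once, and a bi-infinite line (the $\Z$-graph) contributes two ends even though it is a single geometric path. To handle this I would first reduce to the case where $\Gamma$ is a finite tree with $\gamma(\Gamma)$ rays attached. Then I would verify directly that each ray yields exactly one tail class of exit-free paths in $E_\Gamma$, and that the oppositely oriented copy yields none, because it eventually runs into the finite core, where it either branches or ends.
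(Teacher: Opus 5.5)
Your proposal is correct and follows the paper's proof essentially step for step. Since $\Gamma$ is a tree, $E_\Gamma$ is acyclic (your closed-reduced-path argument supplies what the paper only asserts), so Lemma~\ref{lem:ends-ktheory} reduces everything to counting the $d$ sinks coming from dead-ends together with the $\gamma(\Gamma)$ exit-free outward chains, and \cite[Corollary~2.3]{KPR} gives finite-dimensionality in the finite case. The one degenerate case you set aside genuinely fails rather than merely needing a check: for the one-vertex tree one has $d=0$ and $E_\Gamma=\emptyset$, yet $C^*(\Gamma)=\mathbb{C}$ (a single projection with no relations), so (2), like Theorem~\ref{thm:general-realisation}, requires $\Gamma$ to have at least one edge, whereas the single geometric edge is fine, giving $M_2(\mathbb{C})\oplus M_2(\mathbb{C})$ with $d=2$.
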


\begin{proof}
Since $\Gamma$ has genus zero it is a tree. For (1), a finite tree has a vertex of valence one, so
cannot be essential. For (2) and (3), $E_\Gamma$ contains no loops, so
Lemma~\ref{lem:ends-ktheory} applies, and the ends of $E_\Gamma$ are of two kinds. First, each
dead-end $v$ of $\Gamma$ has a unique incoming edge, and by Remark~\ref{rem:EGamma} that edge is a
sink of $E_\Gamma$; distinct dead-ends give distinct sinks, and every sink of $E_\Gamma$ arises
this way, so the sinks number $d$. Second, the remaining ends of $E_\Gamma$ are its infinite paths
with no exits: taking the finite subgraph of the same genus in the definition of the graph valency to be a single vertex $v$, and comparing Definition~\ref{def:end} with that definition,
these correspond to the infinite chains of $\Gamma$ leaving $v$; the finite branches of $\Gamma$
contribute no infinite paths, only the sinks already counted. If $\Gamma$ is finite then every
path in $\Gamma$ is finite and the ends of $E_\Gamma$ are precisely its $d$ sinks; moreover
\cite[Corollary~2.3]{KPR} identifies $C^*(E_\Gamma)$ as the direct sum of one matrix algebra for
each sink \textup(cf.\ Example~\ref{ex:genus-zero-tripod}\textup), giving (2). If
$\Gamma$ is infinite the infinite chains number $\gamma(\Gamma)$, so $E_\Gamma$ has
$\gamma(\Gamma)+d$ ends, giving (3). In both cases Lemma~\ref{lem:ends-ktheory} gives the
K-groups.
\end{proof}

The type of the algebra is settled by the following observation.

\begin{proposition}\label{prop:AF}
Let $E$ be a row-finite directed graph with no cycles. If $E$ is finite then $C^*(E)$ is
finite-dimensional, and if $E$ is infinite then $C^*(E)$ is AF.
\end{proposition}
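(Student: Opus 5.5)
The plan is to use the structure theory of acyclic graph algebras. The finite case follows from \cite[Corollary~2.3]{KPR}, and the infinite case from an exhaustion by finite subgraphs together with continuity of inductive limits.

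\emph{Finite case.} Let $E$ be finite with no cycles. Every path in $E$ has length at most $|E^0|$, so every vertex connects to a sink. By \cite[Corollary~2.3]{KPR}, $C^*(E)$ is then isomorphic to
\[
\bigoplus_{v\ \text{sink}} M_{|E^*(v)|}(\mathbb{C}),
\]
where $E^*(v)$ is the finite set of paths ending at $v$. Concretely, the elements $s_\mu s_\nu^*$ with $\mu,\nu\in E^*(v)$ form a system of matrix units. The Cuntz--Krieger relations, applied repeatedly, show that every vertex projection $p_w$ is a finite sum of range projections $s_\mu s_\mu^*$ of paths from $w$ to sinks. The algebra is therefore spanned by these matrix units and is finite-dimensional.

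\emph{Infinite case.} Enumerate $E^0=\{v_1,v_2,\dots\}$. Let $F_n$ be the finite subgraph consisting of $v_1,\dots,v_n$, all edges of $E$ with range among them, and their sources. Row-finiteness makes $F_n$ finite, and the $F_n$ increase with union $E$. Each $F_n$ inherits acyclicity. However, the Cuntz--Krieger relation in $C^*(F_n)$ at a vertex which receives edges in $E$ but not in $F_n$ differs from that in $C^*(E)$, since such a vertex is a sink (or source) of $F_n$. The cleanest fix is to work inside $C^*(E)$ directly. Define $B_n=C^*(s_e,p_v : e\in F_n^1,\ v\in F_n^0)\subseteq C^*(E)$. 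Every vertex in $F_n^0$ that receives edges in $F_n$ receives all its $E$-edges there, by construction. The family therefore satisfies the relations of a relative Cuntz--Krieger family on $F_n$, relative to the set of vertices that are not sinks of $F_n$.

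The main obstacle is identifying $B_n$ as finite-dimensional. There are two routes. The first is to show that this relative family is a genuine Cuntz--Krieger family for the graph $F_n'$ obtained by adding a sink tail at the sinks of $F_n$ that are not sinks of $E$, following Raeburn--Szyma\'nski's trick. The second is to argue directly. Each $p_w$ with $w$ a sink of $F_n$ is a projection, and for each path $\mu$ ending at $w$ the elements $s_\mu s_\nu^*$ still form matrix units, since $s_\mu^*s_\mu=p_w$ regardless. Thus $B_n$ is spanned by the finitely many $s_\mu s_\nu^*$ with $\mu,\nu$ paths in $F_n$ having common source, and these close under multiplication. They are nonzero, because all vertex projections in $C^*(E)$ are nonzero by the gauge-invariant uniqueness theorem. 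So $B_n$ is a finite-dimensional $C^*$-algebra, with no need for a uniqueness theorem for $F_n$ itself.

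Finally, $B_n\subseteq B_{n+1}$, and $\bigcup_n B_n$ contains all generators $s_e$ and $p_v$ of $C^*(E)$, so it is dense. Hence $C^*(E)=\overline{\bigcup_n B_n}$ is AF. Since $E$ is infinite this is a genuine infinite inductive limit, although AF-ness holds in any case.
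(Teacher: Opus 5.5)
Your proposal is correct. For finite $E$ it coincides with the paper, which likewise just invokes \cite[Corollary~2.3]{KPR}.

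For infinite $E$ the paper gives no argument beyond citing \cite[Theorem~2.4]{KPR}, whereas you reprove that theorem. You exhaust $E$ by finite acyclic subgraphs $F_n$ and show that the subalgebras $B_n\subseteq C^*(E)$ they generate are finite-dimensional. Working inside $C^*(E)$ is the right way to avoid the mismatch of relations at boundary vertices.

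Your route buys self-containment, and it actually needs less than you use. The span of $\{s_\mu s_\nu^* : \mu,\nu \text{ paths in } F_n,\ s(\mu)=s(\nu)\}$ has three properties:
\begin{itemize}
\item it is closed under multiplication, by the standard formula for $s_\nu^*s_\alpha$ (which holds in $C^*(E)$ because $s_e^*s_f=\delta_{e,f}p_{s(e)}$);
\item it contains every $p_v$ and $s_e$ of $F_n$;
\item it is finite-dimensional, because a finite acyclic graph has finitely many paths.
\end{itemize}
So the matrix-unit analysis, the question of which relations survive at the boundary of $F_n$, and the nonvanishing of the $p_v$ play no role. Any increasing exhaustion of $E$ by finite subgraphs would do. The paper's citation buys brevity and, through \cite{KPR}, the explicit block structure of the finite-dimensional pieces.

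Two small points to tidy.
\begin{itemize}
\item Nonvanishing of the vertex projections in $C^*(E)$ comes from the existence of a Cuntz--Krieger $E$-family with nonzero projections, not from the gauge-invariant uniqueness theorem. As noted, you do not need it anyway.
\item You change conventions between the two halves. Your $F_n$, finite because each $r^{-1}(v)$ is finite, lives in the Raeburn convention used by the paper. There the vertices at which the relation is lost, and on which matrix units are based, are those receiving no edges of $F_n$, with matrix units $s_\mu s_\nu^*$ for $s(\mu)=s(\nu)=w$. The words ``sink'' and ``paths ending at $v$'' belong to the opposite convention of \cite{KPR}, which you use in your finite-case paragraph. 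Passing between the conventions reverses all edges, which preserves finiteness, acyclicity and row-finiteness in the matching sense, so nothing breaks, but the labels should be made consistent.
\end{itemize}
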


\begin{proof}[Sketch of proof]
We know from \cite[Theorem~2.4]{KPR} that the $C^*$-algebra of a directed graph with no cycles is
AF; when $E$ is finite, \cite[Corollary~2.3]{KPR} shows that $C^*(E)$ is a finite direct sum of
matrix algebras, and hence is finite-dimensional.
\end{proof}

\begin{theorem}\label{thm:genus0}
Let $\Gamma$ be locally finite, connected and essential. If $g(\Gamma)=0$ and $\Gamma$ is
infinite, then $C^*(\Gamma)$ is an AF algebra.
\end{theorem}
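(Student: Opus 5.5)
The plan is to pass to the directed model and apply Proposition~\ref{prop:AF}. By Theorem~\ref{thm:general-realisation}, and since $\Gamma$ is essential, Remark~\ref{rem:EGamma} and Theorem~\ref{thm:essential-CK} give
\[
C^*(\Gamma)\cong C^*(E_\Gamma)
\]
with $E_\Gamma^0=\Gamma^1$ and $E_\Gamma^1=\Gamma^2$. Local finiteness makes $E_\Gamma$ row-finite, and $E_\Gamma$ is infinite because $\Gamma$ is. It therefore suffices to show that $E_\Gamma$ has no (directed) cycles. Proposition~\ref{prop:AF} then yields that $C^*(E_\Gamma)$ is AF.

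The key step is the cycle check. A path in $E_\Gamma$ of length $n$ is a sequence of vertices $e_0,e_1,\dots,e_n$ of $E_\Gamma$, i.e.\ edges of $\Gamma$, in which each consecutive pair $e_{i+1}e_i$ is an edge of $E_\Gamma$. That means $s(e_{i+1})=r(e_i)$ and $e_{i+1}\neq\bar e_i$. So paths in $E_\Gamma$ correspond exactly to reduced paths $e_n\cdots e_0$ in $\Gamma$, and a directed cycle in $E_\Gamma$ corresponds to a reduced closed path in $\Gamma$ with $e_n=e_0$. Moreover, because the wrap-around step is itself an edge of $E_\Gamma$, this closed path is cyclically reduced.

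Since $g(\Gamma)=0$, the graph $\Gamma$ is a tree, and a tree admits no nontrivial reduced closed path. I would argue this directly: take a cyclically reduced closed path $e_n\cdots e_1$ with $n\ge1$ and look at a vertex of the path at maximal distance from a fixed base vertex. At that vertex the path arrives along some edge $e_i$ and leaves along $e_{i+1}$. Both edges must join it to its unique neighbour closer to the base, so $e_{i+1}=\bar e_i$, which contradicts reducedness. Hence $E_\Gamma$ has no cycles, and the theorem follows.

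The main obstacle is only bookkeeping. One has to make sure that the maximal-distance argument covers the cyclic index as well as the linear ones, i.e.\ that it uses the $E_\Gamma$-edge from $e_n$ back to $e_1$. One also has to confirm that loops ($n=1$) are excluded, since a loop would require $e=\bar e$ at a vertex, which Serre's axiom $\bar e\neq e$ forbids. Essentiality is not really needed for acyclicity; it enters only to identify $E_\Gamma$ cleanly without tail vertices, though the tails would add no cycles anyway.
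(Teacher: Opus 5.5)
Your proof is correct and follows the paper's route: realise $C^*(\Gamma)$ as $C^*(E_\Gamma)$, observe that a cycle in $E_\Gamma$ would be a cyclically reduced closed path in the tree $\Gamma$, and invoke Proposition~\ref{prop:AF}; your maximal-distance argument fills in the acyclicity step that the paper asserts in one line. One small correction to your closing remark: a loop $ee$ at a vertex $e$ of $E_\Gamma$ requires $s(e)=r(e)$ (the condition $e\neq\bar e$ holds automatically), so the case $n=1$ is ruled out because a tree has no loop edges, not by Serre's axiom $\bar e\neq e$.
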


\begin{proof}[Sketch of proof]
An essential $\Gamma$ of genus zero is an infinite tree with all paths infinite, so
$E_\Gamma$ has no cycles and Proposition~\ref{prop:AF} applies.
\end{proof}

\subsection*{Genus one}

\begin{theorem}\label{thm:genus1}
Let $\Gamma$ be locally finite, connected and essential with $g(\Gamma)=1$.
\begin{enumerate}
\item If $\Gamma$ is finite, then $\Gamma$ is a circuit and
\[
C^*(\Gamma)\cong M_n(C(\T))\oplus M_n(C(\T))\ \text{for some $n$},\qquad K_0(C^*(\Gamma))\cong K_1(C^*(\Gamma))\cong\Z^{2},
\]
so $C^*(\Gamma)$ is an AT-algebra; in particular it is neither simple nor purely infinite even for
finite connected $\Gamma$.
\item If $\Gamma$ is infinite with graph valency $\gamma(\Gamma)<\infty$, then
\[
K_0(C^*(\Gamma))\cong\Z^{\gamma(\Gamma)+1},\qquad K_1(C^*(\Gamma))\cong\Z;
\]
moreover $C^*(\Gamma)$
contains an infinite projection, so unlike the finite case it is not an AT-algebra.
\end{enumerate}
\end{theorem}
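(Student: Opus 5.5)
We would argue through the realisation $C^*(\Gamma)\cong C^*(E_\Gamma)$ of Theorem~\ref{thm:general-realisation}; since $\Gamma$ is essential, $E_\Gamma^0=\Gamma^1$ and $E_\Gamma^1=\Gamma^2$ by Remark~\ref{rem:EGamma}.

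For (1), a finite connected essential graph of genus one has $|\Gamma^1|/2=|\Gamma^0|$ geometric edges. Every valence is at least $2$, and the valences sum to $2|\Gamma^0|$, so every valence equals $2$. Hence $\Gamma$ is a circuit with $n$ vertices $v_1,\dots,v_n$ and edges $e_i$ (from $v_i$ to $v_{i+1}$) together with their reversals. In $E_\Gamma$ the only reduced continuation of $e_i$ is $e_{i+1}$, and the only continuation of $\bar e_i$ is $\bar e_{i-1}$. So $E_\Gamma$ is the disjoint union of two directed cycles of length $n$, neither of which has an exit. The standard computation (e.g.\ \cite{KPR}, or directly the matrix units $S_\mu S_\nu^*$ together with the unitary given by the cycle) gives $C^*(C_n)\cong M_n(C(\T))$, hence $C^*(\Gamma)\cong M_n(C(\T))\oplus M_n(C(\T))$. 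Then $K_0\cong K_1\cong\Z^2$, which is also Lemma~\ref{lem:ends-ktheory} with two ends and two loops. The AT property is immediate, and non-simplicity follows because there are two summands. Pure infiniteness fails because the algebra is finite (it has a faithful tracial state).

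For (2), write $\Gamma$ as a circuit $C$ (the unique cycle) with finitely many infinite trees attached. Essentiality means every tree vertex has valence at least $2$, so there are no dead-ends and $d=0$. In $E_\Gamma$ the two orientations of $C$ give two directed cycles. The clockwise cycle has an exit precisely when some vertex of $C$ carries a tree: at such a vertex $v$, an edge $e$ with $r(e)=v$ can continue into the tree. Since $\gamma\ge1$ forces at least one attachment, both cycles of $E_\Gamma$ have exits.

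This means Lemma~\ref{lem:ends-ktheory} does not apply directly. We would instead compute $K_*$ from \cite[Theorem~3.2]{RS}, that is, from $\coker$ and $\ker$ of $1-T_\Gamma^t$ (Corollary~\ref{cor:ker-coker}), as follows.
\begin{itemize}
\item The edges pointing out from $C$ into the trees are ends of the type already treated in Lemma~\ref{lem:branching-ends}: there are no loops there, and they contribute the $\gamma$ free generators exactly as in genus zero.
\item The edges pointing back toward $C$ feed into the cycles, and their vertex classes are identified with the cycle classes via the relations.
\item For the cycles: since every edge of $C$ and every inward edge eventually leads into a cycle, we expect the two cycle classes to be related, leaving one extra free generator in $K_0$. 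The kernel $\ker(1-T^t)$ should be spanned by the cycle-space vector (the fundamental cycle with coefficient $+1$ on $e_i$ and $-1$ on $\bar e_i$, as in Robertson's embedding), giving $K_1\cong\Z$.
\end{itemize}
The bookkeeping showing $\coker\cong\Z^{\gamma+1}$ is the main obstacle. We would first contract the finite part of the trees and reduce to the case where $C$ is a single loop vertex with $\gamma$ rays attached. After that reduction the matrix is explicit, and row reduction should finish the computation.

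For the infinite projection, take a vertex $v$ of $C$ with a tree attached, and let $e$ be the cycle edge with $r(e)=v$. In $E_\Gamma$, the vertex $e$ lies on a cycle with an exit. The standard argument (\cite{KPR}: a cycle with an exit gives an infinite projection) then shows that $Q_e$ is Murray--von Neumann equivalent to a proper subprojection of itself. AT-algebras are stably finite, so $C^*(\Gamma)$ is not AT.
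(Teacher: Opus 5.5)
Part (1) is fine and agrees with the paper. Your count $\sum_v|r^{-1}(v)|=|\Gamma^1|=2|\Gamma^0|$ usefully justifies that $\Gamma$ is a circuit, which the paper simply asserts.

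The gap is in part (2): the K-groups are never actually computed, and the picture you offer as a guide is wrong. Since both circuits of $E_\Gamma$ have exits, the outward chains no longer contribute independent free summands. Going around either copy of the circuit produces exactly one relation, and it falls on the chain classes, not on the circuit classes. Already in Example~\ref{ex:genus-one-infinite} the relation ($**$) at the loop vertex $e$ reads $[e]=[e]+[f_1]$. So the outward class $O_1=[f_1]$ is zero, while $[e]$ and $[\bar e]$ are independent and $K_0\cong\Z^2$ is free on them. In general this relation is $\sum_l O_l=0$, and the inward classes are expressed through the other generators. So $\coker$ is free on the two circuit classes and $\gamma-1$ of the $O_l$. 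Both of your expectations are therefore false: that the outward chains give $\gamma$ free generators ``exactly as in genus zero'', and that the two cycle classes are related. The rank $\gamma+1$ comes out right only because the two errors cancel. A proof has to locate this relation and observe that its coefficients are one, so it is unimodular and creates no torsion; contrast $(g-1)p=0$ when $\gamma=d=0$. Your reduction to a single loop with $\gamma$ rays, via the contraction of Theorem~\ref{thm:K0-torsion}, is legitimate: the change of basis is local, so it also works at genus one and for infinite graphs. But the row reduction you postpone is the whole content of the claim.

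The same applies to $K_1$. ``Should be spanned by the cycle-space vector'' is only an expectation. Theorem~\ref{thm:K1-homology} cannot be applied to the core, because at genus one the bare circuit has kernel $\Z^2$ (your part (1)), not $H_1\cong\Z$. The missing step is the kernel argument of Theorem~\ref{thm:nonessential-K}:
\begin{enumerate}
\item By ($*$) and finite support, every chain coordinate vanishes.
\item Hence the coordinates around the two copies of the circuit are constants $s$ and $t$.
\item Relation ($*$) at the first outward edge $f_1$ of any attached tree then gives $0=x_{f_1}=s+t$, cutting $\Z^2$ down to $\Z\cong H_1(\Gamma)$.
\end{enumerate}

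A smaller point concerns the infinite projection. In the paper's realisation ($S_{fe}^*S_{fe}=Q_e$, with the Cuntz--Krieger relation at the range), the edge from $e$ into the outward tree that you call an exit does not make $Q_e$ infinite. What does is a second edge with range on the cycle, namely the edge arriving from the inward copy of an attached tree, which is the edge the paper uses. Such edges exist here, so your conclusion stands once the right edge is named.
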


\begin{proof}[Sketch of proof]
For part~(1), $T_\Gamma$ consists
of two cyclic permutations---one for
each way around the circuit---so $\Ot_{T_\Gamma}$ splits as two copies of $M_n(C(\T))$ for some
$n$. For part~(2), $\Gamma$ is a circuit with infinite trees attached, and $E_\Gamma$ has the
structure described before Example~\ref{ex:genus-one-infinite}. The kernel argument in the proof of
Theorem~\ref{thm:nonessential-K} applies with core the circuit, giving
$K_1(C^*(\Gamma))\cong H_1(\Gamma)\cong\Z$. For the cokernel, in the presentation ($**$) of that
proof the inward-chain classes are expressed in terms of the remaining generators, the classes
along each copy of the circuit reduce to a single generator, and traversing either copy of the
circuit produces the one relation $\sum_{l}O_l=0$ among the $\gamma$ outward-chain classes; hence
the cokernel is free on the two circuit classes and $\gamma-1$ of the $O_l$, giving
$K_0(C^*(\Gamma))\cong\Z^{\gamma+1}$ \textup(for $\gamma=1$ this is the computation of
Example~\ref{ex:genus-one-infinite}\textup). Finally, at a circuit vertex $x$ of $E_\Gamma$ to which an inward tree attaches, the
circuit edge $\mu$ satisfies $s_\mu^*s_\mu=p_x$ and $s_\mu s_\mu^*<p_x$, since $p_x$ also
dominates the range projection of the incoming tree edge; thus $s_\mu$ is a proper isometry in
$p_xC^*(\Gamma)p_x$ and $p_x$ is an infinite projection. Since every AT-algebra has stable rank
one, hence is stably finite \cite[Proposition~3.2.4]{Rordam}, and AT-algebras are not closed under
extensions \cite[Proposition~3.2.5]{Rordam}, the algebra $C^*(\Gamma)$ is not AT---the Toeplitz
extension is precisely the phenomenon occurring here.
\end{proof}

Theorem~\ref{thm:genus1}(1) supplies the case $g=1$ left open in \cite{CLM}.

The methods of Theorem~\ref{thm:genus1} also cover the non-essential genus-one graphs; the
non-essential genus-zero graphs are covered by Lemma~\ref{lem:branching-ends}.

\begin{corollary}\label{cor:genus-one-nonessential}
Let $\Gamma$ be a non-essential connected graph with $g(\Gamma)=1$, graph valency
$\gamma(\Gamma)<\infty$ and finitely many dead-ends, $d\ge 1$ in number \textup(if $\Gamma$ is
finite then $\gamma(\Gamma)=0$\textup). Then
\[
K_1(C^*(\Gamma))\cong H_1(\Gamma)\cong\Z,\qquad K_0(C^*(\Gamma))\cong\Z^{1+\gamma+d},
\]
and $C^*(\Gamma)$ contains an infinite projection, so is not an AT-algebra.
\end{corollary}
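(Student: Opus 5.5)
We will compute the K-theory through the realisation $C^*(\Gamma)\cong C^*(E_\Gamma)$ of Theorem~\ref{thm:general-realisation} and the Raeburn--Szyma\'nski recipe \cite{RS}. For a row-finite $E_\Gamma$ with sinks and sources, $K_1$ is $\ker(1-A^t)$ and $K_0$ is $\coker(1-A^t)$, where $1-A^t$ is restricted to the free abelian group on the non-sink vertices and maps into $\Z E_\Gamma^0$. The argument has two parts, one for each group, followed by the infinite projection.

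\textbf{Geometry and the $K_1$ computation.} Since $g=1$, $\Gamma$ consists of a single circuit $C$ with finitely many finite trees hanging off (these carry the $d$ dead-ends) and finitely many infinite trees with $\gamma$ outward chains. Hence $E_\Gamma$ contains exactly two cycles, namely the two orientations of $C$. Edges pointing into $C$ from a tree form inward chains (or tails ending at a source $u_f$), and edges pointing away from $C$ form outward chains, ending either in sinks (dead-ends) or in infinite paths.

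For $K_1$ we reuse the kernel argument cited in the proof of Theorem~\ref{thm:genus1}(2), taking the circuit as core. An element of $\ker(1-A^t)$ must vanish on every vertex that is not on a cycle. To see this, propagate along the trees: on outward chains this propagation terminates at sinks or escapes to infinity with finite support, and on inward chains it terminates at sources. On each copy of the circuit the element is then constant, but branching at a circuit vertex into an outward edge forces the two orientations to cancel. This leaves exactly the antisymmetric cycle, so $\ker\cong H_1(\Gamma)\cong\Z$.

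\textbf{The $K_0$ computation.} We take the presentation ($**$) of $\coker(1-A^t)$ and perform the bookkeeping as follows.
\begin{itemize}
\item Each inward-chain vertex class (including the source vertices $u_f$) is expressed in terms of the class downstream, and is therefore eliminated.
\item The classes along each oriented copy of $C$ collapse to a single generator, giving two circuit classes.
\item Each outward chain ending at a dead-end contributes a free sink class. Each infinite outward chain contributes a class $O_l$, since along a chain with no branching all its vertices are identified.
\item Traversing a copy of the circuit yields a relation. In the essential case this relation was $\sum_l O_l=0$; here the relation involves the $\gamma$ classes $O_l$ together with the $d$ sink classes.
\end{itemize}
The main obstacle, and the step to check carefully, is the rank count. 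We must verify that the two traversal relations coincide and are primitive, and that no torsion appears; this contrasts with the finite essential case, where the $\Z/(g-1)$ torsion arises. The plan is to show that the single relation has some coefficient equal to $\pm1$ on a sink class, which exists because $d\ge1$. That generator can then be eliminated, leaving a free group on $2+\gamma+d-1=1+\gamma+d$ generators.

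When $\Gamma$ is finite we have $\gamma=0$, and the same count gives $\Z^{1+d}$. This is consistent with the pattern $K_0\cong\Z^{g+\gamma+d}$ of Theorem~\ref{thm:nonessential-K}.

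\textbf{Infinite projection.} Since $d\ge1$ and $\Gamma$ is connected and non-essential, some tree attaches to $C$ at a vertex, and so some vertex $x$ of $E_\Gamma$ on a copy of the circuit receives an edge from outside the circuit. This is either the reversal of a dead-end branch or a tail edge. The argument of Theorem~\ref{thm:genus1}(2) then applies verbatim. The circuit edge $\mu$ into $x$ satisfies $s_\mu^*s_\mu=p_{s(\mu)}$, which is Murray--von Neumann equivalent to $p_x$ after going once around the circuit, while $s_\mu s_\mu^*<p_x$. This yields an infinite projection. By \cite[Proposition~3.2.4]{Rordam}, AT-algebras are stably finite, so $C^*(\Gamma)$ is not AT.
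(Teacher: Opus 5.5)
Your proposal is correct and takes essentially the same route as the paper. It uses the Raeburn--Szyma\'nski presentation for $E_\Gamma$, makes kernel elements vanish on the trees so that the attachment constraint forces the two circuit constants to be opposite, and eliminates the tree classes to leave the two circuit classes together with the $S_j$ and $O_l$ subject to the single relation $\sum_j S_j+\sum_l O_l=0$ (coefficient one on each sink class); it finishes with a cycle having an entrance plus stable finiteness of AT-algebras.

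One detail in the last step needs tidying. $p_{s(\mu)}$ need not be Murray--von Neumann equivalent to $p_x$ when several circuit vertices have entrances, so argue with the whole cycle $\alpha$ based at $x$, for which $s_\alpha^*s_\alpha=p_x$ and $s_\alpha s_\alpha^*\le s_\mu s_\mu^*<p_x$. Also, the entering edge always comes from the inward copy of an attached tree, never from a tail edge $w_f$.
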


\begin{proof}[Sketch of proof]
Here $\Gamma$ is a circuit with trees attached, at least one of which carries a dead-end. The
arguments of Theorem~\ref{thm:genus1}(2) apply, with the presentation in the proof of
Theorem~\ref{thm:nonessential-K} handling the finite branches: each dead-end contributes its sink
class, and traversing either copy of the circuit produces the single relation
$\sum_jS_j+\sum_lO_l=0$, so the cokernel is free of rank $2+(d+\gamma)-1=1+\gamma+d$. For the
kernel, the bare circuit contributes $\Z^{2}$, cut down to $\Z\cong H_1(\Gamma)$ by the attachment
constraint at any attachment vertex, which forces the two constants to be opposite. The infinite
projection arises exactly as in Theorem~\ref{thm:genus1}(2), the inward copy of any attached tree
providing an entrance to each circuit. For example, for a finite circuit carrying two pendant
paths \textup($\gamma=0$, $d=2$\textup) a direct computation with the map $K$ of
\cite[Theorem~3.2]{RS} confirms $K_0\cong\Z^{3}$ and $K_1\cong\Z$.
\end{proof}

\subsection*{Genus at least two}

\begin{theorem}\label{thm:K1-homology}
For an essential locally finite connected graph $\Gamma$ of genus $g\ge 2$, the map
$\varphi_1(f)(x)=f(x)-f(\bar x)$ induces an isomorphism
\[
H_1(\Gamma)\cong\ker\!\big(1-T_\Gamma^{t}\big)\cong K_1(C^*(\Gamma))\cong\Z^{g}.
\]
\end{theorem}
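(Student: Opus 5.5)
The plan is to identify $\ker(1-T_\Gamma^{t})$ directly with the integral cycle space, and then invoke Corollary~\ref{cor:ker-coker} for the isomorphism with $K_1(C^*(\Gamma))$. Throughout I regard $1-T_\Gamma^{t}$ as acting on finitely supported functions $h\colon\Gamma^1\to\Z$, which is the setting of the K-theory recipe of \cite{RS} for the row-finite graph $E_\Gamma$ (whose vertex set is $\Gamma^1$). I take $H_1(\Gamma)$ to be the group of integral $1$-cycles, that is, integer combinations of geometric edges with zero boundary at every vertex; since $\Gamma$ is a graph there are no $2$-cells, so this is the whole of $H_1$. Fixing an orientation, a $1$-chain is a finitely supported $f$ on the chosen edges, extended by $0$. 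Then $\varphi_1(f)$ is an antisymmetric function, $\varphi_1(f)(\bar x)=-\varphi_1(f)(x)$, and the cycle condition reads $\sum_{s(x)=v}\varphi_1(f)(x)=0$ for every $v\in\Gamma^0$.

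First, $\varphi_1$ maps cycles into the kernel. If $h=\varphi_1(f)$ with $f$ a cycle, then for each $e$
\[
T_\Gamma^{t}h(e)=\sum_{s(x)=r(e)}h(x)-h(\bar e)=0+h(e),
\]
so $h\in\ker(1-T_\Gamma^{t})$. The map is injective, because $f$ is recovered from $h$ by restricting to the chosen orientation.

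The main step is surjectivity. Let $h$ be a finitely supported function with $T_\Gamma^{t}h=h$. Put $S(v)=\sum_{s(x)=v}h(x)$. The fixed-point equation says exactly that
\[
h(e)+h(\bar e)=S(r(e))\quad\text{for every } e\in\Gamma^1 .
\]
The left-hand side is symmetric under $e\mapsto\bar e$, so $S(r(e))=S(s(e))$ for every edge. By connectedness, $S\equiv c$ is constant on $\Gamma^0$. The key point is that $c=0$, and I would argue this in two cases.
\begin{itemize}
\item If $\Gamma$ is infinite, then $h$ has finite support, so $S(v)=0$ at some vertex $v$ away from the support, and hence $c=0$.
\item If $\Gamma$ is finite, sum over vertices. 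On one hand $\sum_vS(v)=\sum_{x\in\Gamma^1}h(x)$. On the other hand, grouping each $x$ with $\bar x$ gives $\sum_{\{x,\bar x\}}(h(x)+h(\bar x))=c\cdot\#\{\text{geometric edges}\}$. This yields $c|\Gamma^0|=c|\Gamma^1|/2$, that is, $c\,(g-1)=0$ by Euler's formula. Since $g\ge 2$, we get $c=0$.
\end{itemize}
This is precisely where the hypothesis $g\ge2$ enters; at $g=1$ the argument fails, consistent with Theorem~\ref{thm:genus1}(1). With $c=0$, $h$ is antisymmetric and has zero divergence at every vertex. Taking $f$ to be the restriction of $h$ to the chosen orientation gives an integral cycle with $\varphi_1(f)=h$.

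Finally, the group of finitely supported integral cycles is free. A basis is given by the fundamental cycles attached to the edges outside a spanning tree: a finitely supported cycle is determined by its values on the non-tree edges, and it equals the corresponding combination of fundamental cycles. Hence the group is free of rank $g$, and combined with Corollary~\ref{cor:ker-coker} this gives $K_1(C^*(\Gamma))\cong\Z^{g}$. I expect the only delicate point to be the vanishing of the constant $c$, together with the convention that the kernel is taken on finitely supported functions in the infinite case; everything else is bookkeeping.
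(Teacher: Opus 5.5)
Your proof is correct and follows essentially the paper's route: the antisymmetrisation $\varphi_1$, the observation that $h(e)+h(\bar e)$ equals a constant $c$ on a connected graph, and the Euler count giving $c(g-1)=0$ in the finite case are exactly the paper's steps, and you verify $\partial f=0$ directly rather than through the intertwining relation $(1-T_\Gamma^{t})\varphi_1=\varphi_0\partial$. The one real difference is the infinite case. There you get $c=0$ from a vertex lying outside the finite support of $h$, whereas the paper passes to a finite subgraph of genus at least two and reruns the count; your version is simpler, and it shows that for infinite $\Gamma$ the hypothesis $g\ge 2$ is not needed, which is consistent with Theorem~\ref{thm:genus1}(2).
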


\begin{proof}[Sketch of proof]
The argument follows closely that of Robertson \cite{Robertson}. Fix an orientation
$\Gamma^+\subset\Gamma^1$ containing exactly one edge from each pair $\{e,\bar e\}$, and regard
$\Z\Gamma^+$ as those functions on $\Gamma^1$ vanishing off $\Gamma^+$, so that
$H_1(\Gamma)=\ker\partial$ for the boundary map $\partial\colon\Z\Gamma^+\to\Z\Gamma^0$,
$\partial f(v)=\sum_{r(z)=v}f(z)-\sum_{s(y)=v}f(y)$. Define
$\varphi_0\colon\Z\Gamma^0\to\Z\Gamma^1$ by $\varphi_0f(x)=f(r(x))$. Writing
\[
\big(1-T_\Gamma^{t}\big)g(x)=g(x)+g(\bar x)-\sum_{s(y)=r(x)}g(y),
\]
a direct computation gives the intertwining relation
$\big(1-T_\Gamma^{t}\big)\varphi_1=\varphi_0\,\partial$, so $\varphi_1$ maps
$H_1(\Gamma)=\ker\partial$ into $\ker\big(1-T_\Gamma^{t}\big)$; it does so injectively, since
$\varphi_1f=0$ forces $f(x)=f(\bar x)$, and $f$ vanishes off $\Gamma^+$.

For surjectivity, let $g\in\ker\big(1-T_\Gamma^{t}\big)$ and set $\sigma(x)=g(x)+g(\bar x)$. The
displayed relation shows $\sigma(x)=\sum_{s(y)=r(x)}g(y)$ depends only on $r(x)$; since
$\sigma(x)=\sigma(\bar x)$, it also depends only on $s(x)$, and connectedness forces $\sigma$ to be
constant. If $\Gamma$ is finite, choosing one edge into each vertex and summing gives
$|\Gamma^0|\,\sigma=\sum_{x\in\Gamma^1}g(x)=\sum_{e\in\Gamma^+}\big(g(e)+g(\bar
e)\big)=|\Gamma^+|\,\sigma$; since $|\Gamma^+|=|\Gamma^0|+g-1>|\Gamma^0|$ when $g\ge 2$, we conclude
$\sigma=0$. Then $g(\bar x)=-g(x)$, and $f:=g|_{\Gamma^+}$ satisfies $\varphi_1f=g$ and, since
$\varphi_0$ is injective \textup(since $r\colon\Gamma^1\to\Gamma^0$ is surjective, every vertex of
the essential graph $\Gamma$ receiving an edge\textup), $\partial f=0$; thus
$g\in\varphi_1(H_1(\Gamma))$. If $\Gamma$ is
infinite, then $g$ has finite support and so is supported in a finite connected subgraph of
$\Gamma$; enlarging this to a finite connected subgraph $\widetilde\Gamma$ containing at least two
of the cycles generating $H_1(\Gamma)$ gives $g(\widetilde\Gamma)\ge 2$, and the finite argument
applies. Finally, $\ker\big(1-T_\Gamma^{t}\big)\cong K_1(C^*(\Gamma))$ by
Corollary~\ref{cor:ker-coker}, and $H_1(\Gamma)\cong\Z^{g}$ by the definition of the genus.
\end{proof}

Recall that the \emph{bouquet} (or \emph{rose}) with $g$ petals is the graph $B_g$ with a single
vertex and $g$ geometric loops, so $B_g^0=\{v\}$ and $B_g^1=\{e_1,\bar e_1,\dots,e_g,\bar e_g\}$;
it is essential of genus $g$ for $g\ge 1$. The proof of the next theorem reduces a finite essential
graph of genus $g$ to $B_g$.

\begin{theorem}\label{thm:K0-torsion}
For an essential connected graph $\Gamma$ with $2\le g<\infty$,
\[
K_0(C^*(\Gamma))\cong\Z^{g}\oplus\Z/(g-1)\Z .
\]
\end{theorem}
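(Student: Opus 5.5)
We read the statement for finite $\Gamma$, the only case in which the argument below produces the torsion summand. For infinite $\Gamma$ of finite genus the expected answer is Iyudu's $\Z^{g+\gamma}$, so this case seems to belong with Theorem~\ref{thm:nonessential-K}; that reading is an assumption, not something the statement says. By Corollary~\ref{cor:ker-coker} we must compute $\coker(1-T_\Gamma^{t})$ on $\Z\Gamma^1$. Rather than contracting a spanning tree to the bouquet $B_g$ at the $C^*$-level, we plan to do the same contraction directly on the presentation of the cokernel, by an integral change of variables.

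\textbf{Step 1: rewrite the relations.} Write $[x]$ for the class of $\delta_x$. Using the formula for $(1-T_\Gamma^t)$ displayed in the proof of Theorem~\ref{thm:K1-homology}, the cokernel is the abelian group generated by the $[x]$, $x\in\Gamma^1$, subject to
\[
[x]+[\bar x]=P_{r(x)},\qquad\text{where } P_v:=\sum_{s(y)=v}[y].
\]
The left side is symmetric in $x$ and $\bar x$, so also $[x]+[\bar x]=P_{s(x)}$. Hence $P_u=P_v$ for adjacent $u,v$, and by connectedness all $P_v$ equal a single class $P$. The group is therefore presented by generators $P$ and $[x]$ ($x\in\Gamma^1$), with edge relations $[x]+[\bar x]=P$ and vertex relations $P_v=P$; adding $P$ as a generator together with these relations defining it is a Tietze move.

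\textbf{Step 2: use the edge relations.} Fix an orientation $\Gamma^+$ and eliminate $[\bar e]=P-[e]$ for $e\in\Gamma^+$. What remains is the free abelian group on $P$ and $\{[e]:e\in\Gamma^+\}$, modulo one relation for each vertex $v$:
\[
\sum_{e\in\Gamma^+,\,s(e)=v}[e]-\sum_{e\in\Gamma^+,\,r(e)=v}[e]=\big(1-\mathrm{in}^+(v)\big)P,
\]
where $\mathrm{in}^+(v)=|\{e\in\Gamma^+ : r(e)=v\}|$. The left side is $-\partial^{t}$ applied to the edge variables, with $\partial$ the boundary map from the proof of Theorem~\ref{thm:K1-homology}.
\begin{itemize}
\item Summing all vertex relations kills the left side, since every edge has one source and one range. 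On the right it gives $(|\Gamma^0|-|\Gamma^+|)P=(1-g)P$. Hence $(g-1)P=0$ holds in the group.
\item Choose a spanning tree $\mathcal T$ and a root $v_0$. Delete the relation at $v_0$; it is the negative of the sum of the others plus $(g-1)P=0$, so we keep $(g-1)P=0$ in its place.
\item The incidence matrix of $\mathcal T$ with the $v_0$ row removed is square and unimodular. It is triangular after ordering vertices by distance from $v_0$, each vertex paired with its parent edge, with diagonal entries $\pm1$.
\item So the remaining $|\Gamma^0|-1$ vertex relations can be solved integrally for the tree-edge variables in terms of the $g$ non-tree variables and $P$.
\end{itemize}
This leaves a free generator for each of the $g$ non-tree edges, plus $P$ subject to the single relation $(g-1)P=0$. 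That gives $\Z^g\oplus\Z/(g-1)\Z$. This is the combinatorial shadow of collapsing $\mathcal T$ to a point to obtain $B_g$. As a sanity check we would verify the answer for $B_g$ directly.

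\textbf{Main obstacle.} The delicate point is the integrality of the elimination. We must ensure that solving for the tree variables introduces no denominators, which the unimodularity handles. We must also ensure that the relation $(g-1)P=0$ is neither lost nor strengthened, for example to a multiple such as $\gcd(g-1,|\Gamma^0|)P=0$. To control this, we will track the relation at $v_0$ explicitly and check that the relations kept generate exactly the same subgroup as the original ones.
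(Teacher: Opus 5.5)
Your argument is correct for finite $\Gamma$, and it takes a genuinely different route from the paper's.

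\textbf{The paper's route.} It contracts the edges of a spanning tree one at a time. At each step it asserts a change of basis conjugating $1-T_\Gamma^{t}$ into $(1-T_\Sigma^{t})\oplus 1$, with S\o rensen's collapse move giving stable isomorphism at the $C^*$-level. Only at the end does it compute the cokernel, and only for the bouquet $B_g$: via the invariant factors of $1_g-J_g$ and a splitting of $0\to L/\operatorname{im}\to\coker\to\Z^{2g}/L\to 0$.

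\textbf{Your route.} You perform a single Tietze elimination on the presentation of the cokernel:
\begin{itemize}
\item introduce the common class $P$;
\item eliminate the reversed edges;
\item use the Euler characteristic identity $|\Gamma^+|-|\Gamma^0|=g-1$ to extract $(g-1)P=0$ from the sum of the vertex relations;
\item eliminate the tree-edge variables using unimodularity of the reduced incidence matrix of a spanning tree.
\end{itemize}

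\textbf{What each buys.} Your argument is self-contained and visibly integral. It replaces the paper's iterated change of basis, which is only asserted there. It also needs no $C^*$-level move, since Corollary~\ref{cor:ker-coker} already reduces everything to the cokernel. It identifies the torsion generator explicitly as $P$. This gives the unit class at once: $[1]=\sum_x[x]=|\Gamma^+|P\equiv|\Gamma^0|P$, which has order $(g-1)/\gcd(g-1,|\Gamma^0|)$, the formula the paper quotes from \cite{CLM}. The paper's route buys a structural statement, namely that $C^*(\Gamma)$ is stably isomorphic to $C^*(B_g)$. For finite essential graphs of genus $\ge 2$, however, this also follows from Kirchberg--Phillips once the K-theory is known.

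\textbf{Row/column slip.} The relations $[x]+[\bar x]=P_{r(x)}$ that you read off the displayed formula are the rows of $1-T_\Gamma^{t}$. Strictly, they present $\coker(1-T_\Gamma)$, not $\coker(1-T_\Gamma^{t})$. This is harmless. Since $T_\Gamma^{t}=UT_\Gamma U$ for the reversal permutation $U\delta_x=\delta_{\bar x}$, you can simply read $[x]$ as the class of $\delta_{\bar x}$. Alternatively, invoke Smith normal form.

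\textbf{Finiteness is necessary.} Your restriction to finite $\Gamma$ is not merely a choice of reading. The paper's own contraction only terminates for finite $\Gamma$. Its abstract, introduction and summary table state the result for finite graphs, and the table gives $\Z^{g+\gamma}$ in the infinite essential case. As a counterexample to the literal statement, take $B_2$ with an infinite ray attached. It is essential of genus $2$, and the same elimination gives $K_0\cong\Z^3\not\cong\Z^2$. In your framework, this failure is exactly where summing all the vertex relations becomes unavailable.
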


\begin{proof}[Sketch of proof]
We reduce $\Gamma$ to $B_g$ by successively deleting edges with distinct source and range. Fix
$e\in\Gamma^1$ with $s(e)\neq r(e)$, and let $\Sigma$ be the graph obtained from $\Gamma$ by
deleting $e$, $\bar e$ and the vertex $r(e)$, redirecting the severed edges to $s(e)$; then
$\Sigma$ is essential, connected, and of the same genus. Replacing the basis vectors $\delta_f$ of
$\Z\Gamma^1$ by $\delta_f+\delta_e$ or $\delta_f+\delta_{\bar e}$, as appropriate, for the edges
$f$ incident to the deleted vertex, a direct computation conjugates $1-T_\Gamma^{t}$ into
$\big(1-T_\Sigma^{t}\big)\oplus 1$, with the identity summand acting on the span of
$\delta_e,\delta_{\bar e}$; hence the kernel and cokernel are unchanged. At the level of the
directed graph, this contraction is implemented by the collapse move of S\o rensen
\cite[Theorem~5.2]{Sorensen}, applied at the regular vertices $e$ and $\bar e$ of $E_\Gamma$
\textup(neither supports a loop of length one, as $e$ is not a loop of $\Gamma$\textup), so that
$E_{\Sigma}$ is move-equivalent to $E_\Gamma$ and $C^*(\Sigma)$ is stably isomorphic to
$C^*(\Gamma)$; the change of basis is the matrix counterpart of the row and column additions of
\cite[Lemmas~7.1 and~7.2]{Sorensen}. Iterating over the
geometric edges of a spanning tree reduces $\Gamma$ to $B_g$.

For $B_g$ every pair of edges is composable, so
$1-T_{B_g}^{t}=1+P-\mathbf{1}\mathbf{1}^{t}$ on $\Z B_g^1\cong\Z^{2g}$, where $P$ is the
permutation $\delta_e\mapsto\delta_{\bar e}$ and $\mathbf{1}$ is the all-ones vector. Its image
lies in the sublattice $L\cong\Z^g$ of functions constant on the pairs $\{e,\bar e\}$, and under
the pair-sum identification $L\cong\Z^g$ the operator factors through $1_g-J_g$, where $J_g$ is
the all-ones $g\times g$ matrix. The invariant factors of $1_g-J_g$ are
$1,\dots,1,g-1$, so $L/\operatorname{im}\big(1-T_{B_g}^{t}\big)\cong\Z/(g-1)\Z$, while
$\Z^{2g}/L\cong\Z^{g}$ is free; the resulting extension splits, giving
$\coker\big(1-T_{B_g}^{t}\big)\cong\Z^{g}\oplus\Z/(g-1)\Z$. The theorem now follows from
Corollary~\ref{cor:ker-coker}.
\end{proof}

Theorems~\ref{thm:K1-homology} and~\ref{thm:K0-torsion} recover, and slightly extend, the
computations of Cornelissen--Lorscheid--Marcolli \cite{CLM} and Iyudu \cite{Iyudu}.

\begin{theorem}\label{thm:nonessential-K}
Let $\Gamma$ be a non-essential connected graph with $2\le g=g(\Gamma)<\infty$, graph valency
$\gamma=\gamma(\Gamma)<\infty$, and finitely many dead-ends, $d\ge 1$ in number. Then
\[
K_1(C^*(\Gamma))\cong H_1(\Gamma)\cong\Z^{g},\qquad K_0(C^*(\Gamma))\cong\Z^{g+\gamma+d}.
\]
Moreover $C^*(\Gamma)$ is not simple, but contains an infinite projection.
\end{theorem}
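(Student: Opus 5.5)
We work throughout with the directed model: by Theorem~\ref{thm:general-realisation}, $C^*(\Gamma)\cong C^*(E_\Gamma)$, and $E_\Gamma$ is row-finite. Because $d\ge 1$, $E_\Gamma$ has exactly $d$ sinks (the edges $e$ with $r(e)$ a dead-end) and $d$ sources $u_f$. The K-theory then comes from \cite[Theorem~3.2]{RS}. Write $E^0_{\mathrm{reg}}$ for the non-sink vertices. Then
\[
K_0(C^*(E_\Gamma))\cong\coker\big(K\colon\Z E^0_{\mathrm{reg}}\to\Z E_\Gamma^0\big),\qquad
K_1(C^*(E_\Gamma))\cong\ker K,
\]
where $K=\big(1-A^t\big)$ restricted to regular vertices and $A$ is the vertex matrix. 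For infinite $\Gamma$ we first reduce to finite pieces. Write $\Gamma$ as a finite core $\Gamma_0$ of genus $g$ together with finitely many finite trees, each ending in dead-ends, and $\gamma$ infinite chains. Everything not in a finite tree eventually becomes a disjoint union of rays, and continuity of $K_*$ handles those rays. Concretely, we follow each infinite chain far enough past all branching vertices. Beyond that point, each geometric edge $\{e,\bar e\}$ on an outward ray contributes a pair of $E_\Gamma$-paths: an outward one (an exitless infinite path) and an inward one (a chain with no entrance). Along these, $K$ acts as a shift, so the inward chain classes can be eliminated, while each outward chain leaves one free generator $O_l$.

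\textbf{$K_1$.} We take $x\in\ker K$. The argument of Theorem~\ref{thm:K1-homology} transfers with two changes. At a sink there is no equation, and a source $u_f$ has no incoming edge, so $x(u_f)$ appears only in the equation at $f$. That equation reads $x(f)=x(u_f)$, since $w_f$ is the only edge into $f$. Propagating back from a dead-end shows that $x$ restricted to a pendant finite tree satisfies $x(\bar e)=-x(e)$ and that these values are $0$, since a tree carries no cycles. Hence $x$ is supported on the core. On the core, $\sigma(x)=x(x)+x(\bar x)$ is constant, and it vanishes because it vanishes at an attachment vertex. So $x=\varphi_1(f)$ with $\partial f=0$, and $\ker K\cong H_1(\Gamma)\cong\Z^g$.

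\textbf{$K_0$.} We choose generators for the cokernel as in the genus-one argument. The presentation ($**$) has the following generators: the core edge classes; one sink class $S_j$ per dead-end; one outward class $O_l$ per infinite chain; and the tail sources $u_f$, which are identified with $f$ by the relation at $f$. We first contract a spanning tree of the core to a bouquet, using the change of basis of Theorem~\ref{thm:K0-torsion}. Each row addition is unimodular, so the cokernel is unchanged. This leaves the matrix of $B_g$ bordered by the attachment data. The $B_g$-block alone gives $\Z^g\oplus\Z/(g-1)\Z$; the only relation there is the sum relation giving $(g-1)$-torsion. Now the attachments change the single vertex relation $\sum_{y}\delta_y=\delta_x+\delta_{\bar x}$. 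It becomes
\[
(g-1)\,c=\sum_j S_j+\sum_l O_l
\]
in the pair-sum coordinates. The key point is that the right-hand side contains at least one generator, because $d\ge1$, and that generator appears with coefficient $\pm1$ and in no other relation. We can therefore solve this relation for one $S_j$, which eliminates the relation entirely. The torsion disappears and the free rank becomes $2g-g+(d+\gamma)=g+\gamma+d$.

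\textbf{Main obstacle: the bookkeeping of the collapse.} The hardest step is to show that the collapse of Theorem~\ref{thm:K0-torsion} remains valid when the contracted core vertices carry attached trees. The severed tree edges must be redirected without changing the cokernel, and each attached tree, finite or infinite, must contribute exactly one unit-coefficient generator to the relation. We will first attempt this by induction on the number of tree vertices, pruning leaves one at a time. At each prune we check that $\coker K$ changes by the expected $\Z$ summand or stays the same.

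\textbf{Non-simplicity and infinite projection.} The set of vertices that can reach a sink, closed under the hereditary saturation, is a proper nonzero hereditary saturated set. It is nonzero because it contains a sink, and proper because it misses the outward core edges. It therefore gives a nontrivial gauge-invariant ideal, so $C^*(\Gamma)$ is not simple. An infinite projection is obtained exactly as in Theorem~\ref{thm:genus1}(2). At a core vertex $x$ of $E_\Gamma$ that receives an edge from an inward tree copy, take a core cycle $\mu$ through $x$. Then $s_\mu^*s_\mu=p_x$ and $s_\mu s_\mu^*<p_x$, so $p_x$ is infinite.
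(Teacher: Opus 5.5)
Your overall plan follows the paper: the recipe of \cite[Theorem~3.2]{RS} on $E_\Gamma$, the kernel pushed onto the core and identified with $H_1$, the cokernel reduced to the bouquet with one relation made unimodular by a sink class, and an infinite projection from a core cycle with an entrance. Your $K_0$ conclusion and your infinite-projection argument are fine. There are, however, two concrete gaps.

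\textbf{The $K_1$ equations are wrong.} With $E^0_{\mathrm{reg}}$ the non-sinks, a sink has no coordinate but it does carry a row, namely $\sum_{v\to w}x_v=0$ (the block $-C^tx=0$). The row at a tail vertex $u_f$ reads $x(u_f)=0$, because $u_f$ has no predecessors; so $x(u_f)$ does not appear ``only in the equation at $f$''. If these rows are dropped, the chain $u_f\to f\to\cdots$ into the core carries a free constant and the kernel is too large. In Example~\ref{ex:genus-two-nonessential} your equations leave $x_a$, $x_b$, $x_{u_e}$ free, which gives rank $3$ instead of $2$.

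Consequently, ``since a tree carries no cycles'' does not derive the vanishing on the trees, and nothing in your system makes $\sigma$ vanish at an attachment vertex. Both facts come from exactly the omitted rows:
\begin{itemize}
\item zeros propagate from each $u_f$ along the chains running into the core, and along infinite chains by finite support;
\item the sink row at the end of each chain leaving an attachment vertex $c$ forces $\sum_{r(w)=c,\,w\in\Gamma_c^1}x_w=0$;
\item this sum is $\sigma$, because the core row at $v$ gives $x(v)+x(\bar v)=\sum_{r(w)=s(v),\,w\in\Gamma_c^1}x_w$.
\end{itemize}
With the rows restored, your route is a valid variant of the paper's. The paper instead applies Theorem~\ref{thm:K1-homology} to the finite core $\Gamma_c$, which uses the counting $|\Gamma_c^+|>|\Gamma_c^0|$ and hence $g\ge2$, and then notes that the attachment constraints hold automatically for $\varphi_1(f)$.

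\textbf{The set you use for non-simplicity is not proper.} Since $d\ge1$, some core vertex $c$ carries a tree containing a dead-end. The non-backtracking graph of the finite essential core of genus $\ge2$ is strongly connected (the irreducibility used in Corollary~\ref{cor:ker-coker}). So every core edge reaches an edge into $c$ and then runs out through the tree to a sink. For finite $\Gamma$, every tail or tree vertex likewise either runs into the core or out to a dead-end. Hence every vertex of $E_\Gamma$ can reach a sink, and for infinite $\Gamma$ the hereditary saturated closure of your set is still all of $E_\Gamma^0$; there are no core edges that it misses.

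What does work is the hereditary saturated set generated by the singular vertices alone. These are the sinks in the convention you use for the recipe, or the tail vertices $u_f$ in the Australian convention of Theorem~\ref{thm:general-realisation}. Saturation then adds only vertices of pendant chains and never a core edge, because each core edge both emits an edge to, and receives an edge from, a core edge.

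\textbf{On $K_0$.} The paper avoids the obstacle you flag by ordering the steps. It first eliminates all tree generators via ($**$), so that each core relation merely acquires the attachment classes at its vertex as extra summands. Only then does it contract the core to $B_g$, so no tree is present during the collapse. Also, a tree contributes one class per dead-end and one per infinite chain, not one per tree, as your own displayed relation $(g-1)c=\sum_jS_j+\sum_lO_l$ already indicates.
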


\begin{proof}[Sketch of proof]
Let $\Gamma_c$ be the smallest connected subgraph of $\Gamma$ containing every cycle. Since
$g<\infty$, $\Gamma_c$ is finite; it is essential of genus $g$, and $\Gamma\setminus\Gamma_c$ is a
disjoint union of trees, containing between them the $d$ dead-ends and the $\gamma$ infinite
chains. Write $W$ for the set of sinks of $E_\Gamma$, one for each dead-end, put
$V=E_\Gamma^0\setminus W$, and let $K\colon\Z V\to\Z V\oplus\Z W$ be the map
$K(x)=\big((1-B^{t})x,\,-C^{t}x\big)$ of \cite[Theorem~3.2]{RS}, where
$\left(\begin{smallmatrix} B & C\\ 0 & 0\end{smallmatrix}\right)$ is the block form of the vertex
matrix of $E_\Gamma$ with respect to $E_\Gamma^0=V\sqcup W$, so that
$K_1(C^*(\Gamma))\cong\ker K$ and $K_0(C^*(\Gamma))\cong\coker K$. Unwinding the definition of $K$, a
finitely supported $x\in\Z V$ lies in $\ker K$ if and only if
\begin{equation}
x_v=\sum_{\{w\in V:\,w\to v\}}x_w\quad(v\in V),\qquad\qquad
\sum_{\{v:\,v\to w\}}x_v=0\quad(w\in W),\tag{$*$}
\end{equation}
while $\coker K$ is generated by the classes $[u]$, $u\in E_\Gamma^0$, subject to
\begin{equation}
[v]=\sum_{\{u:\,v\to u\}}[u]\quad(v\in V).\tag{$**$}
\end{equation}

For $K_1$ we adapt the proof of Theorem~\ref{thm:K1-homology}. Let $x\in\ker K$. On each tree the
relations ($*$) force $x$ to vanish, working inwards from the extremities: the coordinates $u_f$
have no predecessors, so vanish; along an outward-pointing path from a dead-end each coordinate has
the previous one as its unique predecessor, so all vanish; the inward-pointing coordinates along
such a path are pairwise equal by ($*$), and equal to zero by the sink relation at the dead-end;
and along an infinite chain the inward and outward coordinates are constant, hence zero by finite
support \textup(cf.\ Example~\ref{ex:genus-two-nonessential}, where the coordinates at $u_e$ and $e$
vanish\textup). A general tree is handled by induction from its extremities. The relation ($*$) at
the innermost coordinate of each tree leaves behind the constraint
$\sum_{\{w\in\Gamma_c^1:\,r(w)=c\}}x_w=0$ at its attachment vertex $c$. Thus $x$ is supported on
$\Gamma_c^1$, where ($*$) is the kernel condition for $1-T^{t}_{\Gamma_c}$; by
Theorem~\ref{thm:K1-homology} applied to the finite essential graph $\Gamma_c$ we get
$x=\varphi_1(f)$ for a cycle $f$, and the attachment constraints are automatic on such $x$, since
$\sum_{r(w)=c}\big(f(w)-f(\bar w)\big)=-(\partial f)(c)=0$. Hence
$\ker K\cong H_1(\Gamma_c)=H_1(\Gamma)\cong\Z^{g}$.

For $K_0$ we first eliminate the tree generators using ($**$): $[u_f]=[f]$; the classes of the
outward-pointing edges of a tree are pairwise equal and equal to a sum of core classes at the
attachment vertex; the classes of the inward-pointing edges along a branch ending in a dead-end are
pairwise equal and equal to the class $S$ of the sink there; and along an infinite chain the
outward classes are pairwise identified to a single class $O$, which survives with no further
relation, by the continuity of $K_*$ as in Lemma~\ref{lem:ends-ktheory}. There results a
presentation of $\coker K$ with generators the core classes together with $S_1,\dots,S_d$ and
$O_1,\dots,O_\gamma$, in which each core relation acquires the attachment classes at its vertex as
extra summands. Contracting a spanning tree of $\Gamma_c$ as in the proof of
Theorem~\ref{thm:K0-torsion}---justified there by the collapse move of
\cite[Theorem~5.2]{Sorensen}---reduces to the bouquet $B_g$ carrying all the attachments at its
single vertex: writing $p_i=[x_i]+[\bar x_i]$ for the pair-sums of the $2g$ petal classes,
$q_i=[x_i]-[\bar x_i]$, and $T=\sum_jS_j+\sum_lO_l$, the relations become
\[
p_i=\sum_{j=1}^{g}p_j+T\qquad(1\le i\le g),
\]
so all $p_i$ equal a common class $p$ satisfying $(g-1)p+T=0$, while the differences $q_i$ are
unconstrained. When $d=\gamma=0$ this relation is $(g-1)p=0$ and we recover the torsion class of
Theorem~\ref{thm:K0-torsion}; here $d\ge1$, so the relation has a coefficient-one generator
\textup(any $S_j$\textup) and is unimodular: eliminating $S_1=(1-g)p-\sum_{j\ge2}S_j-\sum_lO_l$
leaves a free abelian group. Counting generators, $\coker K$ is free on the $g$ differences
$q_i$, the class $p$, the remaining $S_2,\dots,S_d$, and $O_1,\dots,O_\gamma$, giving
$\coker K\cong\Z^{g+\gamma+d}$. For $g=2$, $\gamma=0$, $d=1$ this is the computation of
Example~\ref{ex:genus-two-nonessential}.

Since $\Gamma$ has genus $g\ge 2$, there must be two circuits $\mu,\nu$ in $\Gamma$, which give
rise to four circuits $\mu,\bar\mu,\nu,\bar\nu$ in $E_\Gamma$. Since $\Gamma$ is connected, each of
these loops has an entrance, and so gives rise to an infinite projection by a standard argument.
These loops also cause $E_\Gamma$ to be non-cofinal, and so $C^*(\Gamma)\cong C^*(E_\Gamma)$ is not
simple.
\end{proof}

\section{Examples}\label{sec:examples}

\begin{example}\label{ex:genus-zero-tripod}
Recall the genus zero infinite graph $\Gamma$ given in Example~\ref{ex:tripod-gamma}, which is
essential. Now consider $E_\Gamma$, shown below: the inward chain
of each ray connects to the outward chains of the other two, and the three outward chains
$e_1\to e_2\to\cdots$, $f_1\to f_2\to\cdots$ and $g_1\to g_2\to\cdots$ are infinite paths with no
exits.
\begin{center}
\begin{tikzpicture}[>=Stealth, baseline=(current bounding box.center)]
\begin{scope}[xshift=2.4cm]
\fill (0,1.4) circle (2pt) node[above=2pt] {$\bar e_1$};
\fill (0,0) circle (2pt) node[above=2pt] {$\bar f_1$};
\fill (0,-1.4) circle (2pt) node[below=2pt] {$\bar g_1$};
\fill (-1.5,1.4) circle (2pt) node[above=2pt] {$\bar e_2$};
\fill (-1.5,0) circle (2pt) node[above=2pt] {$\bar f_2$};
\fill (-1.5,-1.4) circle (2pt) node[below=2pt] {$\bar g_2$};
\node at (-2.4,1.4) {$\cdots$};
\node at (-2.4,0) {$\cdots$};
\node at (-2.4,-1.4) {$\cdots$};
\draw[->] (-2.05,1.4) -- (-1.6,1.4);
\draw[->] (-2.05,0) -- (-1.6,0);
\draw[->] (-2.05,-1.4) -- (-1.6,-1.4);
\draw[->] (-1.5,1.4) -- (0,1.4);
\draw[->] (-1.5,0) -- (0,0);
\draw[->] (-1.5,-1.4) -- (0,-1.4);
\fill (2.4,1.4) circle (2pt) node[above=2pt] {$e_1$};
\fill (2.4,0) circle (2pt) node[above=2pt] {$f_1$};
\fill (2.4,-1.4) circle (2pt) node[below=2pt] {$g_1$};
\fill (3.9,1.4) circle (2pt) node[above=2pt] {$e_2$};
\fill (3.9,0) circle (2pt) node[above=2pt] {$f_2$};
\fill (3.9,-1.4) circle (2pt) node[below=2pt] {$g_2$};
\node at (4.8,1.4) {$\cdots$};
\node at (4.8,0) {$\cdots$};
\node at (4.8,-1.4) {$\cdots$};
\draw[->] (2.4,1.4) -- (3.9,1.4);
\draw[->] (2.4,0) -- (3.9,0);
\draw[->] (2.4,-1.4) -- (3.9,-1.4);
\draw[->] (3.9,1.4) -- (4.4,1.4);
\draw[->] (3.9,0) -- (4.4,0);
\draw[->] (3.9,-1.4) -- (4.4,-1.4);
\draw[->] (0,1.4) -- (2.32,0.06);
\draw[->] (0,1.4) -- (2.33,-1.33);
\draw[->] (0,0) -- (2.32,1.33);
\draw[->] (0,0) -- (2.32,-1.33);
\draw[->] (0,-1.4) -- (2.33,1.33);
\draw[->] (0,-1.4) -- (2.32,-0.06);
\node at (1.2,-2.3) {$E_\Gamma$};
\end{scope}
\end{tikzpicture}
\end{center}
Now $E_\Gamma$ has ends $(e_{i+1}e_i)_{i=1}^{\infty}$, $(f_{i+1}f_i)_{i=1}^{\infty}$ and
$(g_{i+1}g_i)_{i=1}^{\infty}$, so $K_0(C^*(\Gamma))\cong\Z^{3}=\Z^{\gamma(\Gamma)}$ by
Lemma~\ref{lem:branching-ends}(3). Since $E_\Gamma$ has no loops, $C^*(E_\Gamma)$ is AF by
\cite[Theorem~2.4]{KPR}, and $K_1(C^*(\Gamma))=0$ \textup(cf.\ Theorem~\ref{thm:genus0}\textup).

By way of contrast, consider the finite case: let $\Gamma'$ be the finite tripod $E$ of
Example~\ref{ex:tripod-gamma}, that is, the tree on the vertices $v,x,y,z$ with edges
$e_1,f_1,g_1$. Then $\Gamma'$ is non-essential with $d=3$ dead-ends, and $E_{\Gamma'}$ consists of
the same six hub edges as above, together with a tail at each of $\bar e_1,\bar f_1,\bar g_1$ and
sinks at $e_1,f_1,g_1$. There are no loops, and the ends of $E_{\Gamma'}$ are its three sinks, so
Lemma~\ref{lem:branching-ends}(2) gives $K_0(C^*(\Gamma'))\cong\Z^{3}=\Z^{d}$ and
$K_1(C^*(\Gamma'))=0$. In line with Proposition~\ref{prop:AF}, $C^*(\Gamma')$ is
finite-dimensional: since there is at most one directed path between any two vertices of
$E_{\Gamma'}$ \textup(Remark~\ref{rem:EGamma}\textup) and five vertices reach each sink,
$C^*(\Gamma')\cong M_5(\mathbb{C})\oplus M_5(\mathbb{C})\oplus M_5(\mathbb{C})$.
\end{example}

\begin{example}\label{ex:genus-one}
Let $\Gamma$ be the essential graph of genus one with two vertices and two geometric edges shown on
the left below, with reversed edges drawn dashed. According to Definition~\ref{def:EGamma} we have
$E_\Gamma^0=\{e,\bar e,f,\bar f\}$ and $E_\Gamma^1=\{fe,ef,\bar e\bar f,\bar f\bar e\}$, and
$E_\Gamma$ consists of the two disjoint circuits shown on the right, one for each way around
$\Gamma$.
\begin{center}
\begin{tikzpicture}[>=Stealth, baseline=(current bounding box.center)]
\node at (1.2,-1.4) {$\Gamma$};
\fill (0,0) circle (2pt) node[left] {$u$};
\fill (2.4,0) circle (2pt) node[right] {$v$};
\draw[->] (0,0) to[bend left=55] node[above] {$e$} (2.4,0);
\draw[->,dashed] (2.4,0) to[bend right=20] node[below,pos=0.5,yshift=8pt,fill=white,inner sep=1pt] {$\bar e$} (0,0);
\draw[->] (2.4,0) to[bend left=55] node[below] {$f$} (0,0);
\draw[->,dashed] (0,0) to[bend right=20] node[above,pos=0.5,yshift=-8pt,fill=white,inner sep=1pt] {$\bar f$} (2.4,0);
\node at (9.25,-1.4) {$E_\Gamma$};
\begin{scope}[xshift=6cm]
\fill (0,0) circle (2pt) node[left] {$e$};
\fill (2,0) circle (2pt) node[right] {$f$};
\draw[->] (0,0) to[bend left=40] node[above] {$fe$} (2,0);
\draw[->] (2,0) to[bend left=40] node[below] {$ef$} (0,0);
\end{scope}
\begin{scope}[xshift=10.5cm]
\fill (0,0) circle (2pt) node[left] {$\bar f$};
\fill (2,0) circle (2pt) node[right] {$\bar e$};
\draw[->] (0,0) to[bend left=40] node[above] {$\bar e\bar f$} (2,0);
\draw[->] (2,0) to[bend left=40] node[below] {$\bar f\bar e$} (0,0);
\end{scope}
\end{tikzpicture}
\end{center}
In line with Theorem~\ref{thm:genus1}(1) we have $C^*(\Gamma)\cong C^*(E_\Gamma)\cong M_2(C(\T))\oplus M_2(C(\T))$, which is neither purely
infinite nor simple, even though $\Gamma$ is finite, connected and essential (cf.\
Corollary~\ref{cor:ker-coker}).
\end{example}

If $\Gamma$ is an infinite essential connected graph of genus one, then it consists of a circuit
with at least one infinite tree attached, all of whose paths are infinite. In this case $E_\Gamma$
consists of two copies of the circuit, one for each way around, together with two copies of each
attached tree: an inwards copy, whose edges point towards the circuits and which connects to both
copies, and an outwards copy, which both copies of the circuit connect to, as shown in the
following example.

\begin{example}\label{ex:genus-one-infinite}
Let $\Gamma$ be the genus one graph consisting of a single loop $e$ with an infinite ray $f_1f_2\cdots$
attached, shown on the left below (reversed edges omitted). Then $\Gamma$ is infinite and essential,
and $E_\Gamma$ is the directed graph shown on the right: the loops at $e$ and
$\bar e$ are the two ways around the circuit, the inwards copy
$\cdots\to\bar f_2\to\bar f_1$ of the ray connects to both, and both connect to the outwards copy
$f_1\to f_2\to\cdots$.
\begin{center}
\begin{tikzpicture}[>=Stealth, baseline=(current bounding box.center)]
\fill (0,0) circle (2pt);
\fill (1.5,0) circle (2pt);
\fill (3,0) circle (2pt);
\draw[->] (0,0) edge[loop left, min distance=12mm, looseness=14] node[left] {$e$} (0,0);
\draw[->] (0,0) -- node[above] {$f_1$} (1.5,0);
\draw[->] (1.5,0) -- node[above] {$f_2$} (3,0);
\node at (3.7,0) {$\cdots$};
\node at (1.7,-1.1) {$\Gamma$};
\begin{scope}[xshift=7.5cm]
\fill (0,0.7) circle (2pt) node[above=2pt] {$e$};
\fill (1.5,0.7) circle (2pt) node[above=2pt] {$f_1$};
\fill (3,0.7) circle (2pt) node[above=2pt] {$f_2$};
\node at (3.8,0.7) {$\cdots$};
\fill (0,-0.7) circle (2pt) node[below=2pt] {$\bar e$};
\fill (1.5,-0.7) circle (2pt) node[below=2pt] {$\bar f_1$};
\fill (3,-0.7) circle (2pt) node[below=2pt] {$\bar f_2$};
\node at (3.8,-0.7) {$\cdots$};
\draw[->] (0,0.7) edge[loop left, min distance=10mm, looseness=14] (0,0.7);
\draw[->] (0,-0.7) edge[loop left, min distance=10mm, looseness=14] (0,-0.7);
\draw[->] (0,0.7) -- (1.5,0.7);
\draw[->] (1.5,0.7) -- (3,0.7);
\draw[->] (3.15,0.7) -- (3.45,0.7);
\draw[->] (3,-0.7) -- (1.5,-0.7);
\draw[->] (3.4,-0.7) -- (3.15,-0.7);
\draw[->] (1.5,-0.7) -- (0,-0.7);
\draw[->] (1.5,-0.7) -- (0.1,0.6);
\draw[->] (0,-0.7) -- (1.4,0.6);
\node at (1.7,-1.6) {$E_\Gamma$};
\end{scope}
\end{tikzpicture}
\end{center}
Note that once again $C^*(\Gamma)$ is neither simple nor purely infinite, even though $\Gamma$ is
essential; nor is it an AT-algebra \textup(see Theorem~\ref{thm:genus1}(2)\textup).
\end{example}

\subsection*{Non-essential $g\ge 2$} We illustrate Theorem~\ref{thm:nonessential-K} with a worked
example. Recall the K-theory recipe used in its proof: for a
row-finite directed graph $E$ with sinks, let $W$ denote the set of sinks of $E$ and
$V=E^0\setminus W$, so that the vertex matrix of $E$ has block form
$M=\left(\begin{smallmatrix} B & C\\ 0 & 0\end{smallmatrix}\right)$ with respect to the decomposition
$E^0=V\sqcup W$. Defining $K\colon\Z V\to\Z V\oplus\Z W$ by $K(x)=\big((1-B^{t})x,\,-C^{t}x\big)$,
one has $K_0(C^*(E))\cong\coker K$ and $K_1(C^*(E))\cong\ker K$ \cite[Theorem~3.2]{RS}.

\begin{example}\label{ex:genus-two-nonessential}
Let $\Gamma$ be the non-essential graph of genus two consisting of two loops $a$ and $b$ at a
vertex $v$, together with a pendant edge $e$ from a dead-end $u$ to $v$, shown on the left below
(reversed edges omitted). According to Definition~\ref{def:EGamma}, $E_\Gamma$ has vertices
$\{a,\bar a,b,\bar b,e,\bar e,u_e\}$; the vertex $\bar e$ is a sink, $u_e$ is a source with the
single edge $w_e\colon u_e\to e$, and the remaining edges are the reduced pairs of $\Gamma$, as
shown on the right below.
\begin{center}
\begin{tikzpicture}[>=Stealth, baseline=(current bounding box.center)]
\fill (0,0) circle (2pt) node[below=2pt] {$u$};
\fill (2,0) circle (2pt) node[below=2pt] {$v$};
\draw[->] (0,0) -- node[above] {$e$} (2,0);
\draw[->] (2,0) edge[loop above, min distance=10mm, looseness=14] node[above] {$a$} (2,0);
\draw[->] (2,0) edge[loop right, min distance=10mm, looseness=14] node[right] {$b$} (2,0);
\node at (1,-1.1) {$\Gamma$};
\begin{scope}[xshift=7.2cm]
\fill (-1.2,0) circle (2pt) node[below=2pt] {$u_e$};
\fill (0.3,0) circle (2pt) node[below=2pt] {$e$};
\fill (2.1,1.7) circle (2pt) node[above right=-2pt] {$a$};
\fill (4.1,1.7) circle (2pt) node[above left=-2pt] {$b$};
\fill (2.1,-1.7) circle (2pt) node[below right=-2pt] {$\bar a$};
\fill (4.1,-1.7) circle (2pt) node[below left=-2pt] {$\bar b$};
\fill (5.9,0) circle (2pt) node[below=2pt] {$\bar e$};
\draw[->] (-1.2,0) -- (0.3,0);
\draw[->] (0.3,0) -- (2.02,1.63);
\draw[->] (0.3,0) -- (2.02,-1.63);
\draw[->] (0.3,0) -- (4.02,1.66);
\draw[->] (0.3,0) -- (4.02,-1.66);
\draw[->] (2.1,1.7) edge[loop, out=115, in=175, min distance=8mm, looseness=10] (2.1,1.7);
\draw[->] (4.1,1.7) edge[loop, out=65, in=5, min distance=8mm, looseness=10] (4.1,1.7);
\draw[->] (2.1,-1.7) edge[loop, out=185, in=245, min distance=8mm, looseness=10] (2.1,-1.7);
\draw[->] (4.1,-1.7) edge[loop, out=-5, in=-65, min distance=8mm, looseness=10] (4.1,-1.7);
\draw[->] (2.1,1.7) to[bend left=15] (4.1,1.7);
\draw[->] (4.1,1.7) to[bend left=15] (2.1,1.7);
\draw[->] (2.1,-1.7) to[bend left=15] (4.1,-1.7);
\draw[->] (4.1,-1.7) to[bend left=15] (2.1,-1.7);
\draw[->] (2.1,1.7) to[bend left=10] (4.1,-1.7);
\draw[->] (4.1,-1.7) to[bend left=10] (2.1,1.7);
\draw[->] (2.1,-1.7) to[bend right=10] (4.1,1.7);
\draw[->] (4.1,1.7) to[bend right=10] (2.1,-1.7);
\draw[->] (2.1,1.7) -- (5.83,0.04);
\draw[->] (2.1,-1.7) -- (5.83,-0.04);
\draw[->] (4.1,1.7) -- (5.85,0.07);
\draw[->] (4.1,-1.7) -- (5.85,-0.07);
\node at (2.35,-2.7) {$E_\Gamma$};
\end{scope}
\end{tikzpicture}
\end{center}
With respect to $V=(a,\bar a,b,\bar b,e,u_e)$ and $W=(\bar e)$ the blocks of the vertex matrix are
\[
B=\begin{pmatrix}
1&0&1&1&0&0\\
0&1&1&1&0&0\\
1&1&1&0&0&0\\
1&1&0&1&0&0\\
1&1&1&1&0&0\\
0&0&0&0&1&0
\end{pmatrix},
\qquad
C=\begin{pmatrix} 1\\1\\1\\1\\0\\0\end{pmatrix}.
\]
The map $K\colon\Z V\to\Z V\oplus\Z W$ of the recipe above is therefore given by the $7\times 6$
integer matrix
\[
K=\begin{pmatrix}1-B^{t}\\ -C^{t}\end{pmatrix}
=\begin{pmatrix}
0&0&-1&-1&-1&0\\
0&0&-1&-1&-1&0\\
-1&-1&0&0&-1&0\\
-1&-1&0&0&-1&0\\
0&0&0&0&1&-1\\
0&0&0&0&0&1\\
-1&-1&-1&-1&0&0
\end{pmatrix},
\]
whose columns are indexed by $V=(a,\bar a,b,\bar b,e,u_e)$, whose first six rows are the block
$1-B^{t}$ indexed by $V$, and whose final row is the block $-C^{t}$ indexed by the sink $\bar e$.
A direct computation gives
$\ker K=\operatorname{span}_\Z\{(1,-1,0,0,0,0),(0,0,1,-1,0,0)\}$ and $\coker K\cong\Z^{3}$, so by
\cite[Theorem~3.2]{RS}
\[
K_1(C^*(\Gamma))\cong\Z^{2}=\Z^{g},\qquad K_0(C^*(\Gamma))\cong\Z^{3}
\]
(cf.\ Theorem~\ref{thm:nonessential-K}).
\end{example}

\subsection*{Moves on undirected graphs}
Since the genus is the first Betti number, any two connected graphs of the same genus are homotopy
equivalent, and the genus-preserving moves on $\Gamma$ are generated by elementary expansions and
collapses: subdivision of an edge, vertex splitting, and the attachment or pruning of trees. The
first two preserve the Morita class of $C^*(\Gamma)$: subdivision and vertex splitting are inverse
to the contraction of a non-loop edge whose endpoints are not dead-ends, which, by the proof of
Theorem~\ref{thm:K0-torsion}, is implemented on $E_\Gamma$ by the collapse move of
\cite[Theorem~5.2]{Sorensen} and so preserves the stable isomorphism class; these moves preserve
$\gamma$ and $d$ as well as $g$. Tree attachment, by contrast, preserves the genus but changes the
algebra: the bouquet $B_g$ has $K_0\cong\Z^{g}\oplus\Z/(g-1)\Z$
\textup(Theorem~\ref{thm:K0-torsion}\textup), while $B_g$ with a single pendant edge attached has
$K_0\cong\Z^{g+1}$ \textup(Theorem~\ref{thm:nonessential-K} with $\gamma=0$, $d=1$\textup), and
attaching an infinite ray instead gives $\Z^{g+1}$ through the graph valency. Thus the Morita
class of $C^*(\Gamma)$ remembers precisely the triple $(g,\gamma,d)$, together with essentiality
and finiteness, and not merely the homotopy type of $\Gamma$.

For finite essential graphs of genus $g\ge 2$ the genus alone rules: $\gamma=d=0$ is forced, the
algebras are unital Kirchberg algebras, and the Kirchberg--Phillips classification \cite{Phillips}
shows that every genus-preserving move between such graphs preserves the stable isomorphism class.
Isomorphism is finer: the class of the unit in $K_0$ has order $(g-1)/\gcd(g-1,|\Gamma^0|)$
\cite{CLM}, so subdivision changes the isomorphism class while preserving the stable one. Finally,
the in- and out-splittings of \cite{Sorensen} and the Cuntz splice act on $E_\Gamma$ but in
general destroy the dual-graph form, so they have no undirected counterparts; whether the moves
above generate stable isomorphism within the class of $C^*$-algebras of undirected graphs, as the
moves of \cite{Sorensen} do for simple graph algebras, is an interesting open question.

\subsection*{The position of the unit}
When $\Gamma$ is finite the algebra $C^*(\Gamma)$ is unital, with $1=\sum_{v\in\Gamma^0}p_v$, and
the class of the unit refines the stable classification above. Since (R1) and (R2) combine to give
$p_v=\sum_{r(e)=v}s_es_e^{*}$, we have $1=\sum_{e\in\Gamma^1}s_es_e^{*}$, so under the
identification of Corollary~\ref{cor:ker-coker} the unit corresponds to the class of the all-ones
vector $\mathbf{1}\in\Z\Gamma^1$ in $\coker(1-T_\Gamma^{t})$, exactly as for Cuntz--Krieger
algebras. For essential $\Gamma$ of genus $g\ge 2$ the class $[1]$ is torsion: for the bouquet
$B_g$ the computation in the proof of Theorem~\ref{thm:K0-torsion} gives
$[\mathbf{1}]=\sum_ip_i=gp=p$, the generator of the torsion summand $\Z/(g-1)\Z$, and in general
$[1]$ has order $(g-1)/\gcd(g-1,|\Gamma^0|)$ \cite{CLM}. In particular subdivision, which
preserves the stable isomorphism class by the preceding subsection, changes the order of the unit
and hence the isomorphism class of these Kirchberg algebras.

In the stably finite cases the unit is instead a strictly positive vector carrying dimension data
that $K_0$ alone forgets. For a finite tree \textup(Lemma~\ref{lem:branching-ends}(2)\textup) the
unit is the dimension vector $(n_1,\dots,n_d)$ of the matrix summands, where $n_j$ is the number
of vertices of $E_\Gamma$ reaching the $j$-th sink: in the finite case of
Example~\ref{ex:genus-zero-tripod}, $[1]=(5,5,5)$. For a circuit of length $n$
\textup(Theorem~\ref{thm:genus1}(1)\textup) the unit is $(n,n)$ with respect to the rank-one
generators of the two summands, so the triple $(K_0,K_1,[1])$ remembers the circuit length, which
the stable classification cannot. For finite non-essential $\Gamma$ of genus one
\textup(Corollary~\ref{cor:genus-one-nonessential}\textup) the group $K_0\cong\Z^{1+d}$ is free,
so $[1]$ is never torsion---the pendant trees that kill the torsion also free the unit. For
example, for a single loop $a$ at $v$ with one pendant edge attached, the presentation \textup($**$\textup)
gives $[\bar e]=0$ and $[e]=[u_e]=[a]+[\bar a]$, so summing the vertex classes of $E_\Gamma$
yields $[1]=3\big([a]+[\bar a]\big)$.

\subsection*{Summary} Collecting the essential-case results and their non-essential counterparts
(the latter obtained from the directed model) gives the following
picture, in which each cell records $(K_0,K_1)$ and the algebra type, together with a reference to
where the entry is proved. Here $\gamma$ is the graph valency and $d$ the number of dead-ends.

\begin{center}
\renewcommand{\arraystretch}{1.4}
\small
\begin{tabular}{llll}
\hline
& genus & finite & infinite \\
\hline
\multirow{3}{*}{essential}
& $0$ & ---\ {\scriptsize Lem~\ref{lem:branching-ends}(1)} & \begin{tabular}[t]{@{}l@{}}$(\Z^{\gamma},0)$, AF\\ {\scriptsize Lem~\ref{lem:branching-ends}(3), Thm~\ref{thm:genus0}}\end{tabular} \\
& $1$ & \begin{tabular}[t]{@{}l@{}}$(\Z^{2},\Z^{2})$, AT\\ {\scriptsize Thm~\ref{thm:genus1}(1)}\end{tabular} & \begin{tabular}[t]{@{}l@{}}$(\Z^{\gamma+1},\Z)$, not AT\\ {\scriptsize Thm~\ref{thm:genus1}(2)}\end{tabular} \\
& $2\le g<\infty$ & \begin{tabular}[t]{@{}l@{}}$(\Z^{g}\oplus\Z/(g-1),\Z^{g})$, p.i.\\ {\scriptsize Thms~\ref{thm:K1-homology}, \ref{thm:K0-torsion}; Cor~\ref{cor:ker-coker}}\end{tabular} & \begin{tabular}[t]{@{}l@{}}$(\Z^{\gamma+g},\Z^{g})$, p.i.\\ {\scriptsize Thm~\ref{thm:K1-homology}; \cite{Iyudu}}\end{tabular} \\
\hline
\multirow{3}{*}{non-essential}
& $0$ & $(\Z^{d},0)$, f.d.\ {\scriptsize Lem~\ref{lem:branching-ends}(2)} & $(\Z^{\gamma+d},0)$, AF\ {\scriptsize Lem~\ref{lem:branching-ends}(3)} \\
& $1$ & \begin{tabular}[t]{@{}l@{}}$(\Z^{1+d},\Z)$, not AT\\ {\scriptsize Cor~\ref{cor:genus-one-nonessential}}\end{tabular} & \begin{tabular}[t]{@{}l@{}}$(\Z^{1+\gamma+d},\Z)$, not AT\\ {\scriptsize Cor~\ref{cor:genus-one-nonessential}}\end{tabular} \\
& $2\le g<\infty$ & \begin{tabular}[t]{@{}l@{}}$(\Z^{g+d},\Z^{g})$, not AT\\ {\scriptsize Thm~\ref{thm:nonessential-K}}\end{tabular} & \begin{tabular}[t]{@{}l@{}}$(\Z^{g+\gamma+d},\Z^{g})$, not AT\\ {\scriptsize Thm~\ref{thm:nonessential-K}}\end{tabular} \\
\hline
\end{tabular}
\end{center}

\noindent Here ``f.d.'' abbreviates ``finite-dimensional'' and ``p.i.'' abbreviates ``purely infinite simple'' (and the genus-$\ge 2$ algebras are unital
Kirchberg algebras in the finite case, classified by their K-theory). The genus $g=\infty$ case is
not treated.

\pagebreak[4]

\subsection*{Acknowledgement}
This paper grew out of joint work with N.~Brownlowe, A.~Mundey, J.~Spielberg and A.~Thomas, whom the author thanks for
many fruitful discussions. The author acknowledges the help of the AI Claude in pulling results from old notes and pointing
out where errors occurred. Of course, Claude helped with the layout, grammar and spelling of the
paper. A copy of the prompts has been saved for archival purposes. No data was generated as part
of this project.


\begin{thebibliography}{99}

\bibitem{Bates} T.~Bates, \emph{Applications of the gauge-invariant uniqueness theorem for graph algebras}, Bull. Austral. Math. Soc. \textbf{65} (2002), 57--67.

\bibitem{BMPST} N.~Brownlowe, A.~Mundey, D.~Pask, J.~Spielberg, A.~Thomas, \emph{$C^*$-algebras
associated to graphs of groups}, Adv. Math. \textbf{316} (2017), 114--186.

\bibitem{CLM} G.~Cornelissen, O.~Lorscheid, M.~Marcolli, \emph{On the K-theory of graph
$C^*$-algebras}, Acta Appl. Math. \textbf{102} (2008), 57--69.

\bibitem{CK} J.~Cuntz, W.~Krieger, \emph{A class of $C^*$-algebras and topological Markov chains},
Invent. Math. \textbf{56} (1980), 251--268.

\bibitem{Iyudu} N.~Iyudu, \emph{K-theory of locally finite graph $C^*$-algebras}, J. Geom. Phys.
\textbf{71} (2013), 22--29.

\bibitem{KPR} A.~Kumjian, D.~Pask, I.~Raeburn, \emph{Cuntz--Krieger algebras of directed graphs},
Pacific J. Math. \textbf{184} (1998), 161--174.

\bibitem{PR} D.~Pask, I.~Raeburn, \emph{On the K-theory of Cuntz--Krieger algebras}, Publ. Res.
Inst. Math. Sci. \textbf{32} (1996), 415--443.

\bibitem{PRennie} D.~Pask, A.~Rennie, \emph{The noncommutative geometry of graph $C^*$-algebras I:
the index theorem}, J. Funct. Anal. \textbf{233} (2006), 92--134.

\bibitem{Phillips} N.~C.~Phillips, \emph{A classification theorem for nuclear purely infinite
simple $C^*$-algebras}, Doc. Math. \textbf{5} (2000), 49--114.

\bibitem{Raeburn} I.~Raeburn, \emph{Graph Algebras}, CBMS Regional Conference Series in Mathematics,
vol.~103, Amer. Math. Soc., Providence, RI, 2005.

\bibitem{RS} I.~Raeburn, W.~Szyma\'nski, \emph{Cuntz--Krieger algebras of infinite graphs and
matrices}, Trans. Amer. Math. Soc. \textbf{356} (2004), 39--59.

\bibitem{Robertson} G.~Robertson, \emph{Invariant boundary distributions for finite graphs},
J. Combin. Theory Ser. A \textbf{115} (2008), 1272--1278.

\bibitem{Rordam} M.~R\o rdam, \emph{Classification of nuclear, simple $C^*$-algebras}, in
Classification of Nuclear $C^*$-Algebras. Entropy in Operator Algebras, Encyclopaedia Math. Sci.,
vol.~126, Springer, Berlin, 2002, pp.~1--145.

\bibitem{Serre} J.-P.~Serre, \emph{Trees}, Springer-Verlag, Berlin, 1980.

\bibitem{Sorensen} A.~P.~W.~S\o rensen, \emph{Geometric classification of simple graph algebras},
Ergodic Theory Dynam. Systems \textbf{33} (2013), 1199--1220.

\end{thebibliography}
\end{document}